\numberwithin{equation}{section}
\newtheorem{theorem}{Theorem}[section]
\newtheorem{lemma}[theorem]{Lemma}
\theoremstyle{definition}
\newtheorem{remark}[theorem]{Remark}
\begin{document}

\title{Spectral analysis of the Dirac operator on a 3-sphere}
\author{Yan-Long Fang\thanks{YF:
Department of Mathematics,
University College London,
Gower Street,
London WC1E~6BT,
UK;
yan.fang.12@ucl.ac.uk.
}
\and
Michael Levitin\thanks{ML:
Department of Mathematics and Statistics,
University of Reading,
Whiteknights,
PO Box 220,
Reading RG6~6AX,
UK;
M.Levitin@reading.ac.uk,
\url{http://www.personal.reading.ac.uk/\~ny901965/}.
}
\and
Dmitri Vassiliev\thanks{DV:
Department of Mathematics,
University College London,
Gower Street,
London WC1E~6BT,
UK;
D.Vassiliev@ucl.ac.uk,
\url{http://www.homepages.ucl.ac.uk/\~ucahdva/};
DV was supported by EPSRC grant EP/M000079/1.
}}

\renewcommand\footnotemark{}



\maketitle
\begin{abstract}
We study the (massless) Dirac operator on a 3-sphere equipped with
Riemannian metric. For the standard metric the spectrum is known.
In particular, the eigenvalues closest to zero are the two double
eigenvalues $+3/2$ and $-3/2$.
Our aim is to analyse the behaviour of eigenvalues when
the metric is perturbed in an arbitrary smooth fashion from the standard one.
We derive explicit perturbation formulae for the two eigenvalues closest to zero,
taking account of the second variations.
Note that these eigenvalues remain double eigenvalues under perturbations
of the metric: they cannot split because of a particular symmetry
of the Dirac operator in dimension three
(it commutes with the antilinear operator of charge conjugation).
Our perturbation formulae show that in the first approximation our two eigenvalues
maintain symmetry about zero and are completely determined
by the increment of Riemannian volume.
Spectral asymmetry is observed only in the second approximation of the perturbation
process.
As an example we consider a special family of metrics,
the so-called generalized Berger spheres,
for which the eigenvalues can be evaluated explicitly.

\

{\bf Keywords: }
Dirac operator; spectral asymmetry; generalized Berger spheres.

\

{\bf MSC classes: }35Q41; 35P15; 58J50; 53C25.

\end{abstract}

\newpage
\tableofcontents

\section{Introduction}
\label{Introduction}

In this paper we study the spectrum of the (massless)
Dirac operator on a 3-sphere,
$\mathbb{S}^3$, equipped with Riemannian metric.

By $y^\alpha$, $\alpha=1,2,3$, we denote local coordinates.
We specify an orientation, see Appendix~\ref{Orientation},
and use only local coordinates with positive orientation.

We will use the following conventions.
Sometimes it will be convenient for us to view
the 3-sphere as the hypersurface
\eqref{definition of 3-sphere} in Euclidean space
$\mathbb{R}^4$, in which case Cartesian coordinates
in $\mathbb{R}^4$ will be denoted by
$\mathbf{x}^{\boldsymbol\alpha}$, $\boldsymbol\alpha=1,2,3,4$.
Hereinafter we will use bold script for 4-dimensional objects and normal script for
3-dimensional objects.
We will use Latin letters for \emph{anholonomic (frame)} indices
and Greek letters for \emph{holonomic (tensor)} indices.
We will use the convention of summation
over repeated indices; this will apply both to
Greek and to Latin indices.
Also, we will heavily use the analytic concepts
of principal and subprincipal symbol of a differential operator;
see definitions in
\cite[subsection 2.1.3]{mybook} for the case of a scalar operator
acting on a single half-density
and, more relevantly,
\cite[Section 1]{israel}
and
\cite[Appendix A]{action}
for the case of a matrix operator acting on a column of half-densities.

We equip $\mathbb{S}^3$ with a Riemannian metric tensor $g_{\alpha\beta}(y)$,
$\alpha,\beta=1,2,3$, and study the corresponding
(massless) Dirac operator $W$.
The Dirac operator on a 3-manifold is a particular operator that can be represented as a first order
elliptic linear differential operator acting on 2-columns of complex-valued scalar fields
(components of a Weyl spinor).
It is written down explicitly in Appendix~\ref{The Dirac operator};
note that the definition depends on the choice of orientation,
see formula \eqref{Analytic definition condition 4}
or formula \eqref{orientation of frame}.
It is known that the Dirac operator $W$ is a self-adjoint operator in
$L^2(\mathbb{S}^3;\mathbb{C}^2)$ whose domain
is the Sobolev space $H^1(\mathbb{S}^3;\mathbb{C}^2)$,
and that the spectrum of $W$ is discrete, accumulating to
$+\infty$ and to~$-\infty$.
Here the inner product in $L^2(\mathbb{S}^3;\mathbb{C}^2)$ is defined as
\begin{equation}
\label{inner product on 2-columns of scalar fields}
\langle v,w\rangle:=\int_{\mathbb{S}^3}
\bigl(w^*v\,\sqrt{\det g_{\alpha\beta}}\,\bigr)\,dy\,,
\end{equation}
where the star stands for Hermitian conjugation
and $dy=dy^1dy^2dy^3$.
Furthermore, it is known that all eigenvalues have even multiplicity
because the linear Dirac operator commutes with the antilinear operator
of charge conjugation
\begin{equation*}
v=
\begin{pmatrix}
v_1\\
v_2
\end{pmatrix}
\mapsto
\begin{pmatrix}
-\overline{v_2}\\
\overline{v_1}
\end{pmatrix}
=:\mathrm{C}(v),
\end{equation*}
see \cite[Appendix A]{jst_part_b} for details.

A detailed analysis of different definitions of the massless Dirac operator
and their equivalence was performed in \cite{FangPhD}.
Let us emphasise that the underlying reason why we can use three equivalent definitions
presented in subsections
\ref{Classical geometric definition},
\ref{Definition via frames} and
\ref{Analytic definition}
is that our manifold is 3-dimensional.
One can define the Dirac operator via frames (subsection~\ref{Definition via frames})
or the covariant symbol (sub\-section~\ref{Analytic definition}) in dimension 3 (Riemannian signature)
or in dimension $3+1$ (Lorentzian signature), see \cite{nongeometric} for details,
but for higher dimensions these approaches do not seem to work.

The Dirac operator is uniquely defined by the metric modulo the gauge transformation
\begin{equation}
\label{transformation of the Dirac operator under change of frame}
W\mapsto R^*WR,
\end{equation}
where
\begin{equation}
\label{special unitary matrix-function}
R:\mathbb{S}^3\to\mathrm{SU}(2)
\end{equation}
is an arbitrary smooth special unitary matrix-function;
see also subsection~\ref{Spin structure}
for a discussion of spin structure.
Obviously, the transformation
\eqref{transformation of the Dirac operator under change of frame},
\eqref{special unitary matrix-function}
does not affect the spectrum.

The Dirac operator $W$ describes the massless neutrino.
We are looking at a single neutrino living in a closed 3-dimensional
Riemannian universe. The eigenvalues are the energy levels of the particle.
The tradition is to associate positive eigenvalues with the energy levels of the neutrino
and negative eigenvalues with the energy levels of the antineutrino.
In theoretical physics literature the (massless) Dirac operator
is often referred to as the Weyl operator
which explains our notation.

In what follows we will mostly conduct a perturbation analysis:
starting from the standard metric on the 3-sphere and perturbing this
metric in an arbitrary fashion, we will write down explicit asymptotic
formulae for the two eigenvalues closest to zero.
At a more abstract level the behaviour of eigenvalues of the massless Dirac operator
under perturbations of the metric was studied in \cite{bourguignon}.
Our analysis is very specific in that
\begin{itemize}
	\item the dimension is three,
	\item our manifold is a topological sphere,
	\item the unperturbed metric is the standard metric for the sphere,
	\item we study only the two eigenvalues closest to zero and
	\item we calculate the second variations for the perturbed eigenvalues
	(Theorem \ref{theorem 2}),
	which is necessary for the observation of spectral asymmetry.
\end{itemize}
We are motivated by the desire to analyse, in detail and explicitly, the problem in a physically meaningful setting.
Namely, we are trying to understand the difference between neutrinos and antineutrinos in curved space.
For other related problems in mathematics and physics literature see, e.g., \cite{dowker1,dowker2}.

Finally, in
Section~\ref{Generalized Berger spheres} and
Appendix~\ref{Explicit formulae for some eigenvalues of Dirac operator on generalized Berger spheres}
we study, non-perturbatively, \emph{all} the eigenvalues for a particular
three-parameter family of metrics known as generalized Berger spheres,
cf.~\cite{godbout}. This extends previous results of \cite{bar_1992,bar_1996a}.

\section{Main results}
\label{Main results}

The standard metric $(g_0)_{\alpha\beta}(y)$ on $\mathbb{S}^3$
is obtained by restricting
the Euclidean metric from $\mathbb{R}^4$ to $\mathbb{S}^3$.
For the standard metric the spectrum of the (massless) Dirac operator on
$\mathbb{S}^3$ has been computed by different authors using different methods
\cite{sulanke,trautman,bar_1996,bar_2000} and reads as follows:
the eigenvalues are
\begin{equation*}
\pm\left(k+\frac12\right),
\qquad k=1,2,\ldots,
\end{equation*}
with multiplicity
$k(k+1)$.

We now perturb the metric, i.e.~consider a metric
$g_{\alpha\beta}=g_{\alpha\beta}(y;\epsilon)$ whose components are smooth
functions of local coordinates $y^\alpha$,
$\alpha=1,2,3$, and small real parameter $\epsilon$, and which
turns into the standard metric
for $\epsilon=0$:
\[
g_{\alpha\beta}(y;0)=(g_0)_{\alpha\beta}(y).
\]

Let $\lambda_+(\epsilon)$
and $\lambda_-(\epsilon)$ be the lowest, in terms of absolute value, positive
and negative eigenvalues of the Dirac operator $W(\epsilon)$.
Our aim is to derive the asymptotic expansions
\begin{equation}
\label{asymptotic expansion for smallest eigenvalues}
\lambda_\pm(\epsilon)=\pm\frac32+\lambda_\pm^{(1)}\epsilon
+\lambda_\pm^{(2)}\epsilon^2+O(\epsilon^3)
\qquad\text{as}\qquad\epsilon\to0.
\end{equation}
Note that $\lambda_\pm(\epsilon)$ are double eigenvalues which cannot
split because eigenvalues of the Dirac operator have even multiplicity.
Note also that the arguments presented in \cite{torus}
apply to any double eigenvalue of the Dirac operator
on any closed orientable Riemannian 3-manifold,
so we know a priori that $\lambda_\pm(\epsilon)$ admit the asymptotic
expansions \eqref{asymptotic expansion for smallest eigenvalues}.
The issue at hand is the evaluation of the asymptotic coefficients
$\lambda_\pm^{(1)}$ and $\lambda_\pm^{(2)}$.

Let
\begin{equation}
\label{definition of volume}
V(\epsilon):=\int_{\mathbb{S}^3}
\rho(\epsilon)\,dy\,,
\end{equation}
$dy=dy^1dy^2dy^3$,
be the Riemannian volume of our manifold.
Here 
\begin{equation*}
\rho(\epsilon)=\rho(y;\epsilon):=\sqrt{\det g_{\mu\nu}(y;\epsilon)}
\end{equation*}
is the Riemannian density for the perturbed metric.

Then
\begin{equation}
\label{asymptotic expansion for volume}
V(\epsilon)=V^{(0)}+V^{(1)}\epsilon
+O(\epsilon^2)
\qquad\text{as}\qquad\epsilon\to0,
\end{equation}
where
\begin{equation}
\label{formula for V0}
V^{(0)}=\int_{\mathbb{S}^3}
\rho_0\,dy\,=\,2\pi^2
\end{equation}
is the volume of the unperturbed 3-sphere,
\begin{equation*}
\rho_0=\rho_0(y):=\sqrt{\det(g_0)_{\mu\nu}(y)}
\end{equation*}
is the standard Riemannian density on the 3-sphere,
\begin{equation}
\label{formula for V1}
V^{(1)}=\frac12\int_{\mathbb{S}^3}
h_{\alpha\beta}\,(g_0)^{\alpha\beta}\,\rho_0\,dy
\end{equation}
and
\begin{equation}
\label{definition of tensor h}
h_{\alpha\beta}:=
\left.
\frac{\partial g_{\alpha\beta}}{\partial\epsilon}
\right|_{\epsilon=0}\,.
\end{equation}

\begin{theorem}
\label{theorem 1}
We have
\begin{equation}
\label{formula for lambda1}
\lambda_\pm^{(1)}=\mp\frac1{4\pi^2}V^{(1)}.
\end{equation}
\end{theorem}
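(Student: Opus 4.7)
My plan is to reduce Theorem~\ref{theorem 1} to a single Hellmann--Feynman computation on one judiciously chosen unit eigenspinor. Since the introduction has recalled that all eigenvalues of $W$ have even multiplicity, the double eigenvalues $\pm\tfrac32$ cannot split under perturbation (a splitting $2=1+1$ would violate the even-multiplicity constraint). Standard degenerate first-order perturbation theory then forces the $2\times 2$ interaction matrix $M_{ij}:=\langle v_i,\,W'(0)\,v_j\rangle_0$ on the $\pm\tfrac32$-eigenspace (for any orthonormal basis $\{v_1,v_2\}$) to equal $\lambda_\pm^{(1)}\,I$, so it suffices to show
\begin{equation*}
\langle v,\,W'(0)\,v\rangle_0 \,=\, \mp\frac{V^{(1)}}{4\pi^2}
\end{equation*}
for any single unit eigenspinor $v$ at eigenvalue $\pm\tfrac32$. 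A short bookkeeping check (differentiating $\|v(\epsilon)\|_\epsilon^2\equiv 1$ and using $W(0)v=\lambda_0 v$) shows that the contributions from the explicit $\epsilon$-dependence of the inner product cancel exactly, so the usual Hellmann--Feynman identity applies despite the moving inner product.

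Next I would compute $W'(0)v$ by identifying spinor bundles along the family $g(\epsilon)$ in a canonical way --- for instance, by pulling back via a smooth family of $g(\epsilon)$-orthonormal frames built from the symmetric square root of $g(\epsilon)$ relative to $g_0$, as in \cite{bourguignon} --- so that $W(\epsilon)$ becomes a family of operators on a fixed spinor bundle over $(\mathbb{S}^3,g_0)$. Working from either the frame-based construction of subsection~\ref{Definition via frames} or the symbol-based construction of subsection~\ref{Analytic definition}, differentiation at $\epsilon=0$ yields a pointwise expression for $W'(0)v$ that is linear in $h_{\alpha\beta}$ and in the first covariant derivatives of $v$, typically of the type $h^{\alpha\beta}\,e_\alpha\cdot\nabla_{e_\beta}v$ plus lower-order Clifford terms.

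Third, I would use the classical fact that the eigenspinors of $W(0)$ for $\pm\tfrac32$ are \emph{Killing spinors} on $(\mathbb{S}^3,g_0)$: $\nabla_X v = \mp\tfrac12\,X\cdot v$. Two consequences then make the computation collapse. First, $|v|^2$ is automatically constant, so unit normalization forces $|v|^2 = 1/V^{(0)} = 1/(2\pi^2)$. Second, the Killing equation converts every covariant derivative of $v$ appearing in $W'(0)v$ into Clifford products $e_\alpha\cdot e_\beta\cdot v$, and the Clifford identity --- which symmetrises $e_\alpha\cdot e_\beta$ into a multiple of $(g_0)_{\alpha\beta}$ --- then collapses the integrand (using symmetry of $h^{\alpha\beta}$) to a scalar multiple of $h_{\alpha\beta}(g_0)^{\alpha\beta}|v|^2$. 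Recognising $\tfrac12\int h_{\alpha\beta}(g_0)^{\alpha\beta}\rho_0\,dy = V^{(1)}$ then yields \eqref{formula for lambda1}, with the sign $\mp$ tracking the Killing number.

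The main obstacle is the second step: producing an explicit, convention-consistent formula for $W'(0)$ compatible with this paper's definition of the Dirac operator (frame choice, orientation, charge conjugation, subprincipal symbol). The structural heart of the argument --- that only the volume variation matters at first order, because the Killing property enforces $|v|^2\equiv\mathrm{const}$ and Clifford symmetrisation kills the trace-free part of $h$ --- is transparent; what is technical is extracting the sharp constant $1/(4\pi^2)$ with the correct sign in the paper's spin-structure conventions.
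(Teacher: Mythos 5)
Your proposal is structurally sound and would yield a correct proof, but it takes a genuinely different route from the paper's. The paper works throughout with half-densities and the Killing vector field frame $(K_\pm)_j$, chosen precisely so that the unperturbed eigenspinor at $\pm\frac32$ becomes a \emph{constant} two-column $v^{(0)}$. In that gauge the $s^j(L_\pm)_j$ part of $W_{1/2}(\epsilon)$ integrates to zero against $v_{1/2}^{(0)}$ (Lemma on formula \eqref{lemma about f eq 1}), so $\lambda_\pm^{(1)}$ reduces to $\frac1{2\pi^2}\int f^{(1)}\rho_0\,dy$, and the entire proof is a hands-on expansion of the scalar subprincipal-symbol function $f(\epsilon)$ of \eqref{subprincipal2} to get $f^{(1)}=-\frac14 h_{\alpha\beta}(g_0)^{\alpha\beta}$. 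Your plan instead pulls the family $W(\epsilon)$ back to a fixed spinor bundle à la Bourguignon--Gauduchon and exploits the Killing spinor equation $\nabla_X v=\mp\frac12 X\cdot v$ plus Clifford symmetrisation. These are two guises of the same geometric input: ``constant in the Killing frame'' is precisely the Killing spinor equation read off in the gauge where the spin connection is $\mp\frac12\sigma_\alpha$, and the Clifford symmetrisation that turns $h^{\alpha\beta}e_\alpha\cdot e_\beta$ into $-\operatorname{tr}_{g_0}h$ is the frame-free counterpart of the paper's observation that only the scalar term $f^{(1)}$ survives. Your version is more conceptual and would generalise verbatim to the round $\mathbb{S}^n$ and its Killing spinors; what it trades away is the self-contained bookkeeping of the sharp constant and sign, which you correctly flag as the remaining technical work and which the paper does by direct computation. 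A minor additional difference: you rederive the non-splitting of the double eigenvalue from the even-multiplicity/charge-conjugation constraint (and correctly note that a $2\times2$ Hermitian matrix with a repeated eigenvalue is scalar), whereas the paper imports the smooth-perturbation framework for even-multiplicity Dirac eigenvalues from \cite{torus}; both are fine.
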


We see that the dependence of the two lowest eigenvalues, $\lambda_\pm(\epsilon)$,
on the small parameter $\epsilon$ is, in the first approximation, very simple:
it is determined by the change of volume only.
As expected, an increase of the volume of the resonator (volume of our Riemannian manifold)
leads to a decrease of the two lowest natural frequencies
(absolute values of the two lowest eigenvalues).
Furthermore, formulae
\eqref{asymptotic expansion for volume},
\eqref{formula for V0}
and
\eqref{formula for lambda1}
imply
\begin{equation}
\label{formula for lambda1 with one third}
\frac{\lambda_\pm^{(1)}}{\lambda_\pm^{(0)}}
=-\frac13
\frac{V^{(1)}}{V^{(0)}}\,,
\end{equation}
where by $\lambda_\pm^{(0)}=\pm\frac32$
we denoted the unperturbed values of the two lowest eigenvalues.
Now put $\ell(\epsilon):=(V(\epsilon))^{1/3}
=\ell^{(0)}\left(1+\frac13
\frac{V^{(1)}}{V^{(0)}}\epsilon+O(\epsilon^2)\right)$,
where $\ell^{(0)}=\ell(0)=(2\pi^2)^{1/3}$.
The quantity $\ell(\epsilon)$ can be interpreted as the characteristic length
of our Riemannian manifold.
It is easy to see that formula \eqref{formula for lambda1 with one third}
is equivalent to the statement
\begin{equation*}
\lambda_\pm(\epsilon)=
\frac{\lambda_\pm^{(0)}}{\ell(\epsilon)}+O(\epsilon^2)
\qquad\text{as}\qquad\epsilon\to0,
\end{equation*}
which shows that in the first approximation the two lowest eigenvalues are inversely proportional to the characteristic length of our Riemannian manifold.

An important topic in the spectral theory of first order elliptic systems is the
issue of spectral asymmetry
\cite{atiyah_short_paper,atiyah_part_1,atiyah_part_2,atiyah_part_3,torus},
i.e.~asymmetry of the spectrum about zero.
From a physics perspective spectral asymmetry describes the difference between
a particle (in our case massless neutrino)
and
an antiparticle (in our case massless antineutrino).
Formulae
\eqref{asymptotic expansion for smallest eigenvalues}
and
\eqref{formula for lambda1}
imply
\begin{equation*}
\lambda_+(\epsilon)+\lambda_-(\epsilon)
=\bigl(\lambda_+^{(2)}+\lambda_-^{(2)}\bigr)\epsilon^2+O(\epsilon^3)
\qquad\text{as}\qquad\epsilon\to0,
\end{equation*}
which means that there is no spectral asymmetry in the first approximation in~$\epsilon$
but there may be spectral asymmetry in terms quadratic in $\epsilon$.

We will now evaluate the asymptotic coefficients $\lambda_\pm^{(2)}$.
We will do this under the simplifying assumption that the Riemannian
density does not depend on $\epsilon$:
\begin{equation}
\label{shear}
\sqrt{\det g_{\mu\nu}(y;\epsilon)}
=
\sqrt{\det(g_0)_{\mu\nu}(y)}\,,
\end{equation}
so that $V^{(1)}=0$.
In mechanics such a deformation is called \emph{shear}.
Then
Theorem~\ref{theorem 1}
implies $\lambda_\pm^{(1)}=0$,
so formula \eqref{asymptotic expansion for smallest eigenvalues} now reads
\begin{equation}
\label{asymptotic expansion for smallest eigenvalues shear}
\lambda_\pm(\epsilon)=\pm\frac32
+\lambda_\pm^{(2)}\epsilon^2+O(\epsilon^3)
\qquad\text{as}\qquad\epsilon\to0.
\end{equation}

In order to evaluate the asymptotic coefficients $\lambda_\pm^{(2)}$
we need to introduce triples of special vector fields
$(K_\pm)_j\,$, $j=1,2,3$.
For their definitions and properties see
Appendix~\ref{Special vector fields on the 3-sphere}.
Here we mention only that these are
triples of orthonormal Killing fields with respect to the standard (unperturbed) metric.

Put
\begin{equation}
\label{definition of tensor h frame}
(h_\pm)_{jk}:=
h_{\alpha\beta}\,(K_\pm)_j{}^\alpha\,(K_\pm)_k{}^\beta,
\end{equation}
where $h_{\alpha\beta}$ is the real symmetric tensor from
\eqref{definition of tensor h}.
Note that the elements of the $3\times3$ real symmetric matrix-function
$(h_\pm)_{jk}(y)$ are scalars,
i.e.~they do not change under changes of local coordinates $y$.
Further on we sometimes
raise and lower frame indices (see subsection \ref{Definition via frames})
and we do this using the Euclidean metric.
This means, in particular,
that raising a frame index
in $(h_\pm)_{jk}$
does not change anything. 

Put also
\begin{equation}
\label{definition of operator Lj}
(L_\pm)_j:=
(K_\pm)_j{}^\alpha\,\frac\partial{\partial y^\alpha}\,,
\qquad j=1,2,3.
\end{equation}
The operators \eqref{definition of operator Lj} are first order linear
differential operators acting on scalar fields over $\mathbb{S}^3$.
The fact that our $(K_\pm)_j$ are Killing vector fields implies
that the operators \eqref{definition of operator Lj} are formally
anti-self-adjoint with respect to the standard inner product
on scalar fields over $\mathbb{S}^3$.
It is also easy to see that
our operators $(L_\pm)_j$, $j=1,2,3$, satisfy the commutator identities
\begin{equation}
\label{commutator identities}
[(L_\pm)_j,(L_\pm)_k]=\mp2\varepsilon_{jkl}(L_\pm)_l\,,
\end{equation}
where $\varepsilon_{jkl}$
is the totally antisymmetric quantity, $\varepsilon_{123}:=+1$.

Let $\Delta$ be the Laplacian on scalar fields over $\mathbb{S}^3$
with standard (unperturbed) metric.
Our $\Delta$ is a
nonpositive operator, so our definition agrees with the one from basic calculus.
By $(-\Delta)^{-1}$ we shall denote the pseudoinverse of the non-negative
differential operator $-\Delta$, see
Appendix \ref{The scalar Laplacian and its pseudoinverse}
for explicit definition.
Obviously, $(-\Delta)^{-1}$ is a classical pseudodifferential operator
of order minus two.

\begin{theorem}
\label{theorem 2}
Under the assumption \eqref{shear}
we have
\begin{equation}
\label{formula for lambda2}
\lambda_\pm^{(2)}=\frac1{2\pi^2}\int_{\mathbb{S}^3}
P_\pm\,\rho_0\,dy\,,
\end{equation}
where
\begin{equation}
\begin{split}
\label{formula for lambda2 quadratic form}
P_\pm&=
\pm\frac14(h_\pm)_{jk}(h_\pm)_{jk}
\\
&\quad-\frac1{16}\varepsilon_{qks}(h_\pm)_{jq}\left[(L_\pm)_s(h_\pm)_{jk}\right]
\\
&\quad\pm\frac18(h_\pm)_{ks}\left[(-\Delta)^{-1}(L_\pm)_{s}(L_\pm)_j(h_\pm)_{jk}\right]
\\
&\quad-\frac1{16}\varepsilon_{qks}(h_\pm)_{rq}
\left[(-\Delta)^{-1}(L_\pm)_{r}(L_\pm)_s(L_\pm)_j(h_\pm)_{jk}\right].
\end{split}
\end{equation}
\end{theorem}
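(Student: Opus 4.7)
The plan is to apply Rayleigh--Schr\"odinger perturbation theory at second order to the double eigenvalues $\lambda_\pm^{(0)} = \pm 3/2$ of $W_0$, exploiting the non-splitting guaranteed by charge conjugation symmetry. Write $W(\epsilon) = W_0 + \epsilon A + \epsilon^2 B + O(\epsilon^3)$ and let $P_\pm$ be the spectral projector of $W_0$ onto the two-dimensional eigenspace $\mathcal{E}_\pm = \ker\bigl(W_0 \mp \tfrac{3}{2}\bigr)$. Since Theorem~\ref{theorem 1} combined with the shear assumption \eqref{shear} yields $\lambda_\pm^{(1)} = 0$, and since by non-splitting the second-order perturbation matrix on $\mathcal{E}_\pm$ must be a scalar multiple of the identity, the coefficient of $\epsilon^2$ is
\begin{equation*}
2\lambda_\pm^{(2)} = \operatorname{tr}\bigl(P_\pm B P_\pm\bigr) + \operatorname{tr}\Bigl(P_\pm A\,(W_0 - \lambda_\pm^{(0)})^{-1}(I - P_\pm)\,A P_\pm\Bigr).
\end{equation*}

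First, I would reduce to the case of a linear-in-$\epsilon$ perturbation $g(\epsilon) = g_0 + \epsilon h$: any two families sharing the same $h = \partial_\epsilon g|_0$ and satisfying \eqref{shear} differ by an $O(\epsilon^2)$ metric increment, which affects $\lambda_\pm^{(2)}$ only through first-order perturbation theory; the full shear condition forces the corresponding second-order volume change to vanish, so the analogue of Theorem~\ref{theorem 1} at that order kills this contribution. Once this reduction is in place, I would use the analytic definition via principal and subprincipal symbols (as in Appendix~\ref{The Dirac operator}) to Taylor-expand $W(\epsilon)$ in a frame adapted to the perturbation, producing explicit formulas: $A$ is a first-order differential operator linear in $h$, while $B$ is a zeroth- and first-order operator quadratic in $h$ and its first derivatives.

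The actual evaluation is organised around the Lie-algebraic presentation of $W_0$ on $\mathbb{S}^3 \cong \mathrm{SU}(2)$: in the trivialization determined by the orthonormal frame dual to $(K_\pm)_j$, up to a scalar shift one has $W_0 = \mathrm{i}\,\sigma^j(L_\pm)_j + \mathrm{const}\cdot I$, and $\mathcal{E}_\pm$ is spanned by constant spinors in this trivialization. Computing $\operatorname{tr}(P_\pm B P_\pm)$ is then a direct algebraic exercise which produces the first two (local) terms of \eqref{formula for lambda2 quadratic form}. For the resolvent piece, I would use
\begin{equation*}
(W_0 - \lambda_\pm^{(0)})^{-1}(I - P_\pm) = (W_0 + \lambda_\pm^{(0)})\bigl(W_0^2 - (\lambda_\pm^{(0)})^2\bigr)^{-1}(I - P_\pm),
\end{equation*}
together with the Schr\"odinger--Lichnerowicz identity (the scalar curvature of the round $\mathbb{S}^3$ is $6$) to rewrite $W_0^2$ as a spinor Laplacian plus a constant curvature term, and then expand in components using Pauli-matrix algebra and the commutation relations \eqref{commutator identities} to reduce, after inversion, to scalar operators of the form $(-\Delta)^{-1}(L_\pm)_{\cdot}\cdots(L_\pm)_{\cdot}$ acting on $(h_\pm)_{jk}$. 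This is where the two nonlocal terms in \eqref{formula for lambda2 quadratic form} come from.

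The main obstacle is the systematic bookkeeping in the resolvent step: the trace $\operatorname{tr}(P_\pm A\,R\,A P_\pm)$ unpacks into many contributions with different placements of $\sigma$-matrices and $(L_\pm)_j$ operators, and showing that these organise precisely into the $\varepsilon_{qks}$-antisymmetric and index-contracted scalar forms appearing in the statement is where the bulk of the technical labour lies. A useful consistency check at each stage is to verify, independently, that the resulting expression is a scalar multiple of the identity on $\mathcal{E}_\pm$; the non-splitting principle guarantees this a priori, but it provides a stringent cross-check during the computation.
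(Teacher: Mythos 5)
Your overall scaffolding matches the paper's: abstract degenerate second--order perturbation theory for the double eigenvalue, reduction to the linear-in-$\epsilon$ metric increment via Theorem~\ref{theorem 1} applied to the trace of the second-order increment (the paper relegates this to Remark~\ref{remarks for theorem 2}(j) but the content is the same), explicit computation in the Killing frame where $\ker\bigl(W^{(0)}\mp\tfrac32\bigr)$ consists of constant spinors, and a final reduction of the resolvent piece to $(-\Delta)^{-1}$ acting on scalar fields.

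There is, however, a genuine error in your treatment of the resolvent. The identity
\[
\bigl(W_0-\lambda_\pm^{(0)}\bigr)^{-1}\bigl(I-P_\pm\bigr)
=\bigl(W_0+\lambda_\pm^{(0)}\bigr)\bigl(W_0^2-(\lambda_\pm^{(0)})^2\bigr)^{-1}\bigl(I-P_\pm\bigr)
\]
fails on the $\mp\tfrac32$ eigenspace when $(W_0^2-\tfrac94)^{-1}$ is read as a pseudoinverse, which is the only way to make it meaningful. Take the $+$ case, $\lambda_+^{(0)}=\tfrac32$. The operator $W_0^2-\tfrac94$ annihilates the $-\tfrac32$ eigenspace as well as the $+\tfrac32$ one, so its pseudoinverse kills both; hence the right-hand side is zero on $\ker\bigl(W_0+\tfrac32\bigr)$, whereas the reduced resolvent on that subspace must equal $(-\tfrac32-\tfrac32)^{-1}=-\tfrac13$. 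The discrepancy contaminates $\operatorname{tr}\bigl(P_+A\,R\,AP_+\bigr)$ by a term proportional to $\operatorname{tr}\bigl(P_+AP_{-3/2}AP_+\bigr)$, which has no reason to vanish for a generic shear $h$. The Schr\"odinger--Lichnerowicz route compounds the problem: $\nabla^*\nabla$ is the spinor connection Laplacian, not the scalar $\Delta$ acting componentwise, so even after the S--L step you are left with an operator $\nabla^*\nabla-\tfrac34$ whose pseudoinverse you cannot immediately identify with $(-\Delta)^{-1}$.

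What the paper does instead, and what your derivation needs, is the \emph{asymmetric} factorisation furnished by the identity $\bigl(W_0-\tfrac12I\bigr)^2=(-\Delta+1)I$, equivalently $\bigl(W_0-\tfrac32I\bigr)\bigl(W_0+\tfrac12I\bigr)=-\Delta$. This is a purely algebraic consequence of $W_0=-is^jL_j+\tfrac32I$, the Pauli identity $s^js^k=\delta^{jk}I+i\varepsilon^{jkl}s^l$, and the commutators \eqref{commutator identities}, and it is exactly the ``expansion in components'' you were planning to perform after S--L. But the right partner to pair with $W_0-\tfrac32$ is $W_0+\tfrac12$, not $W_0+\tfrac32$: since $+\tfrac12\notin\operatorname{spec}W_0$ and $\ker(-\Delta)$ coincides componentwise with $\ker\bigl(W_0-\tfrac32\bigr)$ (constants), one gets cleanly
\[
Q=(-\Delta)^{-1}\Bigl(W_0+\tfrac12I\Bigr)=(-\Delta)^{-1}\bigl(-is^lL_l+2I\bigr)
\]
as the reduced resolvent, with no spurious degeneracy on the $-\tfrac32$ eigenspace. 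Sandwiching $Q$ between $\bigl[v^{(0)}\bigr]^*s^q(\cdot)s^jv^{(0)}$ and using $\operatorname{Re}\bigl[v^{(0)}\bigr]^*s^qs^jv^{(0)}=\tfrac1{2\pi^2}\delta^{qj}$, $\operatorname{Re}\,i\bigl[v^{(0)}\bigr]^*s^qs^ls^jv^{(0)}=-\tfrac1{2\pi^2}\varepsilon^{qlj}$ then produces the two nonlocal terms of \eqref{formula for lambda2 quadratic form} directly. If you replace your resolvent step by this factorisation, the rest of your plan goes through.
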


Theorem \ref{theorem 2} warrants the following remarks.

\begin{remark}
\label{remarks for theorem 2}
\begin{enumerate}
\item[(a)]
We chose the factor $\frac1{2\pi^2}$ in the RHS of
\eqref{formula for lambda2} based on the observation that
the volume of the unperturbed 3-sphere is $2\pi^2$,
see formula \eqref{formula for V0}.
This will simplify the comparison with the appropriate formulae
previously derived for the 3-torus, see item (f) below,
and it will also simplify calculations that will be carried out
in subsection~\ref{Testing Theorem 2 on generalized Berger spheres}.
\item[(b)]
The terms in the RHS of \eqref{formula for lambda2 quadratic form}
are written in such an order that each
subsequent term has an extra appearance of a first
order differential operator $L_\pm\,$.
\item[(c)]
The operators
$(-\Delta)^{-1}(L_\pm)_s(L_\pm)_j$
and
$(-\Delta)^{-1}(L_\pm)_r(L_\pm)_s(L_\pm)_j$
appearing in the last two terms in the
RHS of \eqref{formula for lambda2 quadratic form}
are pseudodifferential operators of order 0 and 1 respectively.
\item[(d)]
The fact that the operators $(L_\pm)_j$, $j=1,2,3$, are formally
anti-self-adjoint with respect to the standard inner product
on $\mathbb{S}^3$ implies that for any scalar field
$f:\mathbb{S}^3\to\mathbb{C}$ we have
\begin{equation}
\label{L maps to orthogonal complement of kernel of Laplacian}
\int_{\mathbb{S}^3}\left[(L_\pm)_jf\right]\rho_0\,dy=0\,.
\end{equation}
Formula \eqref{L maps to orthogonal complement of kernel of Laplacian}
implies that in
the last two terms in the
RHS of \eqref{formula for lambda2 quadratic form}
the operator $(-\Delta)^{-1}$ acts on functions
from $(\operatorname{Ker}\Delta)^\perp$.
\item[(e)]
The operators $(L_\pm)_j$ commute with the scalar Laplacian, hence,
they also commute with $(-\Delta)^{-1}$.
Therefore,
the last two terms in the
RHS of \eqref{formula for lambda2 quadratic form}
can be written in a number of equivalent ways.
\item[(f)]
The second and fourth terms in the
RHS of \eqref{formula for lambda2 quadratic form}
have a structure similar to that of formula (2.5)
from \cite{torus}. In fact, if one adjusts notation
to agree with that of \cite{torus}, then it turns out that
the second and fourth terms in the
RHS of \eqref{formula for lambda2 quadratic form}
are, in effect, an equivalent way of writing formula (2.5)
from \cite{torus}. See Appendix \ref{Comparison with the 3-torus} for more details.
\item[(g)]
The first and third terms in the
RHS of \eqref{formula for lambda2 quadratic form}
do not have an analogue for the case of the 3-torus \cite{torus}.
Their appearance is due to the curvature of the 3-sphere.
\item[(h)]
The first term in the
RHS of \eqref{formula for lambda2 quadratic form}
can be rewritten as
\begin{equation}
\label{elasticity1}
\pm\frac14
h_{\mu\nu}\,h_{\sigma\tau}\,(g_0)^{\mu\sigma}\,(g_0)^{\nu\tau}\,,
\end{equation}
which means that this term does not feel the Killing vector fields
$(K_\pm)_j\,$, $j=1,2,3$, and, hence, does not contribute to spectral asymmetry.
Put
\begin{equation*}
\widetilde h_{\mu\nu}:=
h_{\mu\nu}-\frac13\,\delta_{\mu\nu}\,h_{\sigma\tau}\,(g_0)^{\sigma\tau}\,,
\end{equation*}
which is the part of the deformation tensor $h_{\mu\nu}$ describing shear
(deformation preserving Riemannian density).
Formula \eqref{shear} implies $\,h_{\sigma\tau}\,(g_0)^{\sigma\tau}=0\,$,
so in our case $\widetilde h_{\mu\nu}=h_{\mu\nu}$ and the expression
\eqref{elasticity1} takes the form
\begin{equation*}
\pm\frac14
\widetilde h_{\mu\nu}\,\widetilde h_{\sigma\tau}\,
(g_0)^{\mu\sigma}\,(g_0)^{\nu\tau}\,.
\end{equation*}
Such an expression describes the elastic potential energy
generated by shear, see formula (4.3) in \cite{LL_elasticity}.
\item[(i)]
For a generic perturbation of the metric we expect
\begin{equation}
\label{spectral asymmetry in quadratic terms}
\lambda_+^{(2)}+\lambda_-^{(2)}\ne0,
\end{equation}
which means that we expect spectral asymmetry in terms quadratic in $\epsilon$.
An example illustrating the inequality \eqref{spectral asymmetry in quadratic terms}
will be provided in subsection
\ref{Testing Theorem 2 on generalized Berger spheres}:
see formulae
\eqref{lowest positive eigenvalue for generalized Berger metric expansion order 2}
and
\eqref{lowest negative eigenvalue for generalized Berger metric expansion order 2}.
\item[(j)]
Let us expand the metric tensor in powers of the small parameter $\epsilon$
up to quadratic terms:
\[
g_{\alpha\beta}(y;\epsilon)=(g_0)_{\alpha\beta}(y)
+h_{\alpha\beta}(y)\,\epsilon
+k_{\alpha\beta}(y)\,\epsilon^2+O(\epsilon^3)
\qquad\text{as}\qquad\epsilon\to0.
\]
Here the tensor $h_{\alpha\beta}$ is defined by
\eqref{definition of tensor h} whereas
$k_{\alpha\beta}:=
\frac12
\left.
\frac{\partial^2 g_{\alpha\beta}}{\partial\epsilon^2}
\right|_{\epsilon=0}\,$.
One would expect the coefficients $\lambda_\pm^{(2)}$ in the asymptotic expansions
\eqref{asymptotic expansion for smallest eigenvalues shear}
of the
lowest eigenvalues to depend on the tensor $k_{\alpha\beta}\,$, but
Theorem~\ref{theorem 2}
tells us that it is not the case.
Here a rough explanation is that the only way the tensor $k_{\alpha\beta}$ can
appear in the formulae for $\lambda_\pm^{(2)}$ is linearly, however, condition
\eqref{shear} ensures that the linear terms in the map
\[
\text{perturbation of metric}
\quad\to\quad
\text{perturbation of lowest eigenvalues}
\]
vanish.
\end{enumerate}
\end{remark}

\section{Preparatory material}
\label{Preparatory material}

In this section we present auxiliary results which will be used later
in the proofs of Theorems \ref{theorem 1} and \ref{theorem 2}.
Both theorems offer a choice of signs, so for the sake of brevity
we present all our preparatory material in a form adapted to the case of upper signs.

\subsection{The unperturbed Dirac operator}
\label{The unperturbed Dirac operator}

Suppose that $\epsilon=0$, i.e. suppose that we are working with the standard
(un\-perturbed) metric. It is convenient to write the (massless) Dirac operator using the
triple of vector fields $(K_+)_j$, $j=1,2,3$,
defined in Appendix~\ref{Special vector fields on the 3-sphere}
as our frame,
 see subsection \ref{Definition via frames}
for the definition of a frame.
Straightforward calculations show that in this case the Dirac operator
reads
\begin{equation}
\label{unperturbed massless Dirac operator}
W^{(0)}=-is^j(L_+)_j+\frac32I\,,
\end{equation}
where $s^j$ are the standard Pauli matrices \eqref{Pauli matrices 2},
$(L_+)_j$ are the scalar first order linear differential operators
\eqref{definition of operator Lj}
and $I$ is the $2\times2$ identity matrix.
The superscript in $W^{(0)}$ indicates that the metric is unperturbed.

Let $v^{(0)}$ be a normalised eigenfunction corresponding to the eigenvalue $+\frac32$
of the unperturbed Dirac operator \eqref{unperturbed massless Dirac operator}.
For example, one can take
\begin{equation}
\label{unperturbed eigenfunction}
v^{(0)}=\frac1{\sqrt2\,\pi}
\begin{pmatrix}
1\\0
\end{pmatrix}.
\end{equation}
Here one can replace
$\begin{pmatrix}
1\\0
\end{pmatrix}$ by any other constant complex 2-column of unit norm.
The freedom in the choice of $v^{(0)}$ is due to the fact that $+\frac32$
is a double eigenvalue of the unperturbed Dirac operator.
The choice of a particular $v^{(0)}$ does not affect subsequent calculations,
what matters is that $v^{(0)}$ is a constant spinor.

Choosing the optimal frame (gauge)
is crucial for our subsequent arguments because we will heavily use the fact that the
eigenspinor $v^{(0)}$ of the unperturbed Dirac operator is a constant spinor.
See also Remark \ref{remarks for Killing vector fields}(c).

Observe that the triple of vector fields $(K_+)_j{}^\alpha$ uniquely defines a
triple of co\-vector fields $(K_+)^j{}_\alpha$:
the relation between the two is specified by the condition
\begin{equation}
\label{relation between standard frame and coframe}
(K_+)_j{}^\alpha(K_+)^k{}_\alpha=\delta_j{}^k,
\qquad j,k=1,2,3.
\end{equation}
Of course, the covector $(K_+)^j{}_\alpha$ is obtained by lowering
the tensor index in the vector $(K_+)_j{}^\alpha$ by means of the standard
metric on $\mathbb{S}^3$. Here the position of the frame index $j$, as a subscript
or superscript, is irrelevant.

In the next subsection we will make use of both
the vector fields $(K_+)_j{}^\alpha$
and
the covector fields $(K_+)^j{}_\alpha$.

\subsection{The perturbed Dirac operator}
\label{The perturbed Dirac operator}

Let $e_j{}^\alpha(y;\epsilon)$ be a frame corresponding to the perturbed metric
$g_{\alpha\beta}(y;\epsilon)$.
This frame can be written as
\begin{equation}
\label{expansion of frame over Killing frames}
e_j{}^\alpha(y;\epsilon)=(c_+)_j{}^k(y;\epsilon)\,(K_+)_k{}^\alpha(y)\,,
\end{equation}
where $(c_+)_j{}^k$ are some real scalar fields.
Without loss of generality we choose to work with frames satisfying
the symmetry condition
\begin{equation}
\label{symmetry condition}
(c_+)_j{}^k=(c_+)_k{}^j,
\end{equation}
which can always be achieved by an application of a gauge transformation ---
multiplication by a $3\times3$ orthogonal matrix-function.
Then formulae
\eqref{definition of tensor h},
\eqref{definition of tensor h frame},
\eqref{expansion of frame over Killing frames}
and
\eqref{symmetry condition}
imply
\begin{equation}
\label{frame in terms of h}
(c_+)_j{}^k(y;\epsilon)=\delta_j{}^k-\frac\epsilon2(h_+)_{jk}(y)+O(\epsilon^2)\,.
\end{equation}

The frame \eqref{expansion of frame over Killing frames}
uniquely defines the corresponding coframe $e^j{}_\alpha$
analogously to \eqref{relation between standard frame and coframe}:
\begin{equation}
\label{relation between general frame and coframe}
e_j{}^\alpha e^k{}_\alpha=\delta_j{}^k,
\qquad j,k=1,2,3.
\end{equation}
Of course, the covector $e^j{}_\alpha$ is obtained by lowering
the tensor index in the vector $e_j{}^\alpha$ by means of the perturbed
metric $g_{\alpha\beta}(y;\epsilon)$ on $\mathbb{S}^3$.
Formulae
\eqref{expansion of frame over Killing frames},
\eqref{relation between standard frame and coframe}
and
\eqref{relation between general frame and coframe}
imply
\begin{equation}
\label{expansion of coframe over Killing coframes}
e^j{}_\alpha(y;\epsilon)=(d_+)^j{}_k(y;\epsilon)\,(K_+)^k{}_\alpha(y)\,,
\end{equation}
where
\begin{equation}
\label{relation between coefficients c and d}
(c_+)_j{}^k(d_+)^l{}_k=\delta_j{}^l\,,
\qquad j,l=1,2,3.
\end{equation}
By
\eqref{relation between coefficients c and d}
and
\eqref{symmetry condition},
the matrix of scalar coefficients 
$(d_+)^j{}_k$ is also symmetric,
\begin{equation}
\label{symmetry condition for coefficients d}
(d_+)^j{}_k=(d_+)^k{}_j\,.
\end{equation}
Formulae
\eqref{frame in terms of h},
\eqref{relation between coefficients c and d},
\eqref{symmetry condition}
and
\eqref{symmetry condition for coefficients d}
give
\begin{equation}
\label{coframe in terms of h}
(d_+)^j{}_k(y;\epsilon)=\delta^j{}_k+\frac\epsilon2(h_+)_{jk}(y)+O(\epsilon^2)\,.
\end{equation}

Let $W(\epsilon)$ be the perturbed Dirac operator and let
$W_{1/2}(\epsilon)$ be the corresponding perturbed Dirac operator
on half-densities. According to
\eqref{definition of Weyl operator on half-densities},
the two are related as
\begin{equation}
\label{definition of Weyl operator on half-densities with epsilon}
W(\epsilon)=
(\rho(\epsilon))^{-1/2}
\,W_{1/2}(\epsilon)\,
(\rho(\epsilon))^{1/2}.
\end{equation}

\begin{lemma}
\label{lemma perturbed massless Dirac operator half}
The perturbed Dirac operator on half-densities $W_{1/2}(\epsilon)$
acts on 2-columns of complex-valued half-densities $v_{1/2}$ as
\begin{equation}
\label{lemma perturbed massless Dirac operator half eq 1}
\begin{split}
v_{1/2}\mapsto 
&-\frac i2s^j\,
(\rho_0)^{1/2}
\left[
(c_+)_j{}^k(L_+)_k
+
(L_+)_k(c_+)_j{}^k
\right]
(\rho_0)^{-1/2}\,v_{1/2}
\\
&\qquad+(W_{1/2}(\epsilon))_\mathrm{sub}\,v_{1/2}\,,
\end{split}
\end{equation}
where $(W_{1/2}(\epsilon))_\mathrm{sub}$ is its subprincipal symbol.
\end{lemma}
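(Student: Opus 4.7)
The plan is to start from the frame-based definition of the Dirac operator recalled in the appendix, pass to half-densities, and then rewrite the principal part in a manifestly symmetric form using the Killing-frame expansion \eqref{expansion of frame over Killing frames}. Concretely, when $W(\epsilon)$ is written with respect to the frame $e_j{}^\alpha(y;\epsilon)$ its principal symbol is $s^j e_j{}^\alpha \xi_\alpha$, and any concrete representative differs from $-is^j e_j{}^\alpha \partial_\alpha$ by zeroth-order terms coming from the spin connection.

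Next, conjugation by $\rho(\epsilon)^{\pm 1/2}$, as in \eqref{definition of Weyl operator on half-densities with epsilon}, is multiplication by a smooth positive function and hence modifies only the subprincipal part. Therefore the principal symbol of $W_{1/2}(\epsilon)$ coincides with that of $W(\epsilon)$, and (using the conventions of \cite{israel,action} cited in the Introduction) the unique first-order matrix differential operator with principal symbol $s^j X^\alpha \xi_\alpha$ and vanishing subprincipal symbol is $-\tfrac{i}{2}s^j (X^\alpha \partial_\alpha + \partial_\alpha X^\alpha)$. This decomposes $W_{1/2}(\epsilon)$ as the symmetrized principal piece (with $X^\alpha = e_j{}^\alpha$) plus $(W_{1/2}(\epsilon))_{\mathrm{sub}}$, and separately the symmetrized form will be self-adjoint on half-densities equipped with the Lebesgue inner product, consistent with self-adjointness of $W(\epsilon)$ in \eqref{inner product on 2-columns of scalar fields}.

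It then remains to substitute the Killing-frame expansion \eqref{expansion of frame over Killing frames} into $-\tfrac{i}{2}s^j(e_j{}^\alpha \partial_\alpha + \partial_\alpha e_j{}^\alpha)$. Here the key input is the Killing property of the $(K_+)_k$ with respect to the standard metric: the fact that they preserve the standard Riemannian density, $(L_+)_k \rho_0 = 0$, together with $\partial_\alpha[(K_+)_k{}^\alpha \rho_0] = 0$, yields $\partial_\alpha(K_+)_k{}^\alpha = 0$. A direct product-rule computation then gives $e_j{}^\alpha \partial_\alpha = (c_+)_j{}^k (L_+)_k$ and $\partial_\alpha (e_j{}^\alpha\,\cdot\,) = (L_+)_k (c_+)_j{}^k$ as operators on scalar functions, without residual zeroth-order correction. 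Since $(L_+)_k$ commutes with multiplication by $(\rho_0)^{\pm 1/2}$ (again by $(L_+)_k \rho_0 = 0$), inserting the conjugation $(\rho_0)^{1/2}(\,\cdots\,)(\rho_0)^{-1/2}$ costs nothing and merely records that the operator is acting on half-densities rather than on scalars, giving \eqref{lemma perturbed massless Dirac operator half eq 1}.

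The main obstacle is bookkeeping: one must track carefully which contributions are principal and which are subprincipal, so that every zeroth-order remnant generated by the conjugations, symmetrizations, and product-rule manipulations is legitimately absorbed into $(W_{1/2}(\epsilon))_{\mathrm{sub}}$. The identity $\partial_\alpha (K_+)_k{}^\alpha = 0$ does the decisive work here, for without it the two occurrences of $(L_+)_k (c_+)_j{}^k$ in the symmetric form would generate an extra multiplicative term that would have to be tracked separately.
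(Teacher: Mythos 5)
There is a genuine gap in the middle of your argument. You assert that the Killing property of the $(K_+)_k$ delivers two separate chart-level identities, $(L_+)_k\rho_0=0$ and $\partial_\alpha(K_+)_k{}^\alpha=0$, and you make the first one ``the fact that they preserve the standard Riemannian density.'' Neither identity is correct in general coordinates; the Killing property only yields the single covariant relation
\[
\partial_\alpha\!\bigl(\rho_0\,(K_+)_k{}^\alpha\bigr)=(L_+)_k\rho_0+\rho_0\,\partial_\alpha(K_+)_k{}^\alpha=0\,,
\]
i.e.~the vector fields are divergence-free with respect to the metric, not termwise. For example, in the spherical coordinates of Appendix~\ref{Orientation} one finds $(K_+)_3=(\cos y^2,\,-\cot y^1\sin y^2,\,1)$ and $\rho_0=\sin^2 y^1\sin y^2$, giving $(L_+)_3\rho_0=\sin y^1\cos y^1\sin y^2\cos y^2\ne0$ and $\partial_\alpha(K_+)_3{}^\alpha=-\cot y^1\cos y^2\ne0$; only their $\rho_0$-weighted sum vanishes.

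Because of this, both of your intermediate reductions fail as stated. The product rule gives $\partial_\alpha(e_j{}^\alpha\,\cdot)=(L_+)_k\bigl((c_+)_j{}^k\,\cdot\bigr)+(c_+)_j{}^k\,[\partial_\alpha(K_+)_k{}^\alpha]$, so there \emph{is} a residual zeroth-order correction; and the conjugation $(\rho_0)^{1/2}\bigl[(c_+)_j{}^k(L_+)_k+(L_+)_k(c_+)_j{}^k\bigr](\rho_0)^{-1/2}$ is \emph{not} cosmetic but produces the zeroth-order correction $-(c_+)_j{}^k\,\rho_0^{-1}(L_+)_k\rho_0$. You reach the correct final formula only because these two corrections are equal by the divergence-free identity, so they cancel precisely when both steps are combined --- a cancellation your argument never notices, since each step alone is presented as exact. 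This is exactly the bookkeeping pitfall you flag in your last paragraph: the Killing input must be used covariantly, not decomposed into chart-dependent pieces.

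The paper avoids the issue by organizing the argument differently: having matched principal symbols, it appeals to \cite[Proposition~2.1.13]{mybook} to reduce the claim to showing that each $(\rho_0)^{1/2}(L_+)_k(\rho_0)^{-1/2}$ has vanishing subprincipal symbol, and then verifies this in one line directly from the covariant divergence-free property (the two terms $-\frac12\rho_0^{-1}(L_+)_k\rho_0$ and $-\frac12\partial_\alpha(K_+)_k{}^\alpha$ combine into $-\frac1{2\rho_0}\partial_\alpha(\rho_0(K_+)_k{}^\alpha)=0$). Your ``uniqueness of a first-order operator given its principal and subprincipal symbol'' observation is correct and is the same idea; the fix is simply to stop claiming $(L_+)_k\rho_0=0$ and $\partial_\alpha(K_+)_k{}^\alpha=0$ separately, and instead verify the subprincipal symbol vanishes using only $\partial_\alpha(\rho_0(K_+)_k{}^\alpha)=0$.
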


Let us emphasise that the Riemannian density appearing in
\eqref{lemma perturbed massless Dirac operator half eq 1}
is the un\-perturbed density $\rho_0$ and not the perturbed
density $\rho(\epsilon)$ as in
\eqref{definition of Weyl operator on half-densities with epsilon}.

\begin{proof}[Proof of Lemma \ref{lemma perturbed massless Dirac operator half}]
Formulae \eqref{definition of Weyl operator} and
\eqref{definition of Weyl operator on half-densities with epsilon}
tell us that the principal symbol of the operator
$W_{1/2}(\epsilon)$ is $\sigma^\alpha(y;\epsilon)p_\alpha\,$.
Using formulae
\eqref{Pauli matrices 1},
\eqref{expansion of frame over Killing frames}
and
\eqref{definition of operator Lj}
we can rewrite this principal symbol as
\begin{equation}
\label{lemma perturbed massless Dirac operator half eq 2}
-is^j\,(c_+)_j{}^k[(L_+)_k]_\mathrm{prin}\,.
\end{equation}
But
\eqref{lemma perturbed massless Dirac operator half eq 2}
is also the principal symbol of the operator
\eqref{lemma perturbed massless Dirac operator half eq 1},
so the proof reduces to proving that the operator
\begin{equation*}
v_{1/2}\mapsto 
-\frac i2s^j\,
(\rho_0)^{1/2}
\left[
(c_+)_j{}^k(L_+)_k
+
(L_+)_k(c_+)_j{}^k
\right]
(\rho_0)^{-1/2}\,v_{1/2}
\end{equation*}
has zero subprincipal symbol.
By \cite[Proposition 2.1.13]{mybook}
it is sufficient to prove that the operators
$(\rho_0)^{1/2}(L_+)_k(\rho_0)^{-1/2}$, $k=1,2,3$,
have zero subprincipal symbols.
But the latter is a consequence of
\eqref{definition of operator Lj}
and the fact that our $(K_+)_k{}^\alpha$,
being Killing vector fields with respect to the unperturbed metric,
are divergence-free.
\end{proof}

According to \cite[formulae (6.1) and (8.1)]{jst_part_b}
the explicit formula for the sub\-principal symbol of the
Dirac operator on half-densities reads
\begin{equation}
\label{subprincipal1}
(W_{1/2}(\epsilon))_\mathrm{sub}=If(\epsilon),
\end{equation}
where $I$ is the $2\times2$ identity matrix
and $f(\epsilon)=f(y;\epsilon)$
is the scalar function
\begin{equation}
\label{subprincipal2}
\begin{split}
f(\epsilon)
&:=\frac{\delta_{kl}}{4\rho(\epsilon)}
\left[
e^k{}_{1}
\,\frac{\partial e^l{}_{3}}{\partial y^2}
+
e^k{}_{2}
\,\frac{\partial e^l{}_{1}}{\partial y^3}
+
e^k{}_{3}
\,\frac{\partial e^l{}_{2}}{\partial y^1}
\right.
\\
&\qquad\qquad-\left.
e^k{}_{1}
\,\frac{\partial e^l{}_{2}}{\partial y^3}
-
e^k{}_{2}
\,\frac{\partial e^l{}_{3}}{\partial y^1}
-
e^k{}_{3}
\,\frac{\partial e^l{}_{1}}{\partial y^2}
\right],
\end{split}
\end{equation}
with $e^k{}_j=e^k{}_j(y;\epsilon)$.

Combining formulae
\eqref{definition of Weyl operator on half-densities with epsilon},
\eqref{lemma perturbed massless Dirac operator half eq 1},
\eqref{subprincipal1}
and
\eqref{subprincipal2}
we conclude that
the perturbed Dirac operator $W(\epsilon)$
acts on 2-columns of complex-valued scalar fields $v$ as
\begin{equation}
\label{perturbed massless Dirac operator}
v\mapsto 
-\frac i2s^j\,
\sqrt{\frac{\rho_0}{\rho(\epsilon)}}
\left[
(c_+)_j{}^k(L_+)_k
+
(L_+)_k(c_+)_j{}^k
\right]
\sqrt{\frac{\rho(\epsilon)}{\rho_0}}
\,v\,+\,f(\epsilon)\,v\,.
\end{equation}

Of course, when $\epsilon=0$
formulae
\eqref{perturbed massless Dirac operator}
and
\eqref{subprincipal2}
turn into formula
\eqref{unperturbed massless Dirac operator}
with $W^{(0)}=W(0)$.

\subsection{Half-densities versus scalar fields}
\label{Half-densities versus scalar fields}

Given a pair of 2-columns of complex-valued half-densities, $v_{1/2}$ and $w_{1/2}$,
we define their inner product as
\begin{equation}
\label{inner product on half-densities}
\langle v_{1/2}\,,w_{1/2}\rangle:=
\int_{\mathbb{S}^3}(w_{1/2})^*\,v_{1/2}\,dy\,.
\end{equation}
The advantage of \eqref{inner product on half-densities}
over
\eqref{inner product on 2-columns of scalar fields}
is that the inner product \eqref{inner product on half-densities}
does not depend on the metric. Consequently, if we work with half-densities,
perturbations of the metric will not change our Hilbert space.
And, unsurprisingly, the perturbation process described in \cite[Section 4]{torus}
was written in terms of half-densities.

The explicit formula for
the action of the operator $W_{1/2}(\epsilon)$
reads
\begin{equation}
\label{perturbed massless Dirac operator on half-densities}
v_{1/2}\,\mapsto\, 
-\frac i2s^j\,
(\rho_0)^{1/2}
\left[
(c_+)_j{}^k(L_+)_k
+
(L_+)_k(c_+)_j{}^k
\right]
(\rho_0)^{-1/2}\,v_{1/2}
\,+\,f(\epsilon)\,v_{1/2}\,,
\end{equation}
where $f(\epsilon)$ is the scalar function
\eqref{subprincipal2}.

Formulae
\eqref{perturbed massless Dirac operator on half-densities}
and
\eqref{subprincipal2}
give us a convenient explicit representation
of the perturbed Dirac operator on half-densities $W_{1/2}(\epsilon)$.
We will use this representation in the next two sections
when proving Theorems \ref{theorem 1} and \ref{theorem 2}.

When $\epsilon=0$
formulae
\eqref{perturbed massless Dirac operator on half-densities}
and
\eqref{subprincipal2}
turn into
\begin{equation*}
v_{1/2}\mapsto
-is^j(\rho_0)^{1/2}(L_+)_k(\rho_0)^{-1/2}v_{1/2}+\frac32v_{1/2}\,,
\end{equation*}
which is the action of
the unperturbed Dirac operator on half-densities $W^{(0)}_{1/2}=W_{1/2}(0)$.
The normalised eigenfunction of the operator $W^{(0)}_{1/2}$
corresponding to the eigenvalue $+\frac32$ reads
\begin{equation}
\label{unperturbed eigenfunction half-densities}
v_{1/2}^{(0)}=\rho_0^{1/2}v^{(0)},
\end{equation}
where $v^{(0)}$ is given by formula \eqref{unperturbed eigenfunction}.

\subsection{Asymptotic process}
\label{Asymptotic process}
Let us expand our Dirac operator on half-densities in powers of $\epsilon$,
\begin{equation}
\label{expansion for Dirac half-densities}
W_{1/2}(\epsilon)=W_{1/2}^{(0)}+\epsilon W_{1/2}^{(1)}
+\epsilon^2W_{1/2}^{(2)}+\ldots.
\end{equation}
Then, according to \cite[formulae (4.12) and (4.13)]{torus},
formula
\eqref{asymptotic expansion for smallest eigenvalues} holds with
\begin{align}
\label{abstract formula for lambda1}
\lambda_+^{(1)}&=
\bigl\langle\,
W_{1/2}^{(1)}\,v_{1/2}^{(0)}\,,v_{1/2}^{(0)}
\,\bigr\rangle,
\\
\label{abstract formula for lambda2}
\lambda_+^{(2)}&=
\bigl\langle\,
W_{1/2}^{(2)}\,v_{1/2}^{(0)}\,,v_{1/2}^{(0)}
\,\bigr\rangle
-
\bigl\langle\,
\bigl(W_{1/2}^{(1)}-\lambda_+^{(1)}I\bigr)
Q_{1/2}
\bigl(W_{1/2}^{(1)}-\lambda_+^{(1)}I\bigr)
\,v_{1/2}^{(0)}\,,v_{1/2}^{(0)}
\,\bigr\rangle,
\end{align}
where $Q_{1/2}$ is the pseudoinverse of the operator
$W^{(0)}_{1/2}-\frac32I$.
See \cite[Section 3]{torus} for definition of pseudoinverse.

\begin{lemma}
We have
\begin{equation}
\label{lemma about f eq 1}
\bigl\langle\,
W_{1/2}(\epsilon)\,v_{1/2}^{(0)}\,,v_{1/2}^{(0)}
\,\bigr\rangle
=
\bigl\langle\,
f(\epsilon)\,v_{1/2}^{(0)}\,,v_{1/2}^{(0)}
\,\bigr\rangle
=
\frac1{2\pi^2}\int_{\mathbb{S}^3}f(\epsilon)\,\rho_0\,dy\,.
\end{equation}
\end{lemma}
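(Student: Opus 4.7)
The plan is to substitute the explicit expression \eqref{perturbed massless Dirac operator on half-densities} for $W_{1/2}(\epsilon)$ into the bracket and use in an essential way that $v_{1/2}^{(0)}=\rho_0^{1/2}v^{(0)}$ with $v^{(0)}$ a constant 2-column (formulae \eqref{unperturbed eigenfunction} and \eqref{unperturbed eigenfunction half-densities}). First I would apply the differential part of $W_{1/2}(\epsilon)$ to $v_{1/2}^{(0)}$: the factor $(\rho_0)^{-1/2}$ on the right exactly cancels the $\rho_0^{1/2}$ inside $v_{1/2}^{(0)}$, leaving the spinor $v^{(0)}$. Since $v^{(0)}$ is constant, $(L_+)_k v^{(0)}=0$, so the term $(c_+)_j{}^k(L_+)_k v^{(0)}$ vanishes, while $(L_+)_k\bigl((c_+)_j{}^k v^{(0)}\bigr)=\bigl[(L_+)_k(c_+)_j{}^k\bigr]v^{(0)}$ by the Leibniz rule.

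Next I would compute the inner product of the resulting expression with $v_{1/2}^{(0)}$. After cancelling the $(\rho_0)^{1/2}$ on the left against the $\rho_0^{1/2}$ hidden in $(v_{1/2}^{(0)})^*$ one is left with
\[
-\frac{i}{2}\sum_{j}\bigl((v^{(0)})^* s^j v^{(0)}\bigr)\int_{\mathbb{S}^3}\bigl[(L_+)_k(c_+)_j{}^k\bigr]\,\rho_0\,dy.
\]
The scalar quantities $(v^{(0)})^* s^j v^{(0)}$ are independent of $y$ and therefore pull outside the integral, and each remaining integral vanishes by \eqref{L maps to orthogonal complement of kernel of Laplacian}, which is precisely the formal anti-self-adjointness of the Killing operators $(L_+)_k$ with respect to $\rho_0\,dy$. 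This yields the first equality in \eqref{lemma about f eq 1}.

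Finally, since $f(\epsilon)$ is a scalar function and therefore commutes with $v_{1/2}^{(0)}$, the remaining bracket reduces to $\int f(\epsilon)\,|v_{1/2}^{(0)}|^2\,dy=\int f(\epsilon)\,\rho_0\,|v^{(0)}|^2\,dy$. The normalisation in \eqref{unperturbed eigenfunction} gives $|v^{(0)}|^2=\frac{1}{2\pi^2}$, delivering the second equality.

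There is no real obstacle here: the argument is essentially one observation (constancy of $v^{(0)}$ kills the derivative term when combined with the anti-self-adjointness of $(L_+)_k$), and the rest is bookkeeping of $\rho_0^{\pm 1/2}$ factors. The only point requiring a moment's care is ensuring that $(\rho_0)^{1/2}(L_+)_k(\rho_0)^{-1/2}$ produces no extra subprincipal contribution when acting on a constant spinor — but this is guaranteed by the divergence-free property of the Killing vector fields $(K_+)_k$, which was already exploited in the proof of Lemma~\ref{lemma perturbed massless Dirac operator half}.
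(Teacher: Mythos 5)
Your proof is correct and follows the same route as the paper's one-line argument: substitute the explicit form \eqref{perturbed massless Dirac operator on half-densities} of $W_{1/2}(\epsilon)$, use that $v^{(0)}$ is a constant spinor so only $[(L_+)_k(c_+)_j{}^k]$ survives, and invoke \eqref{L maps to orthogonal complement of kernel of Laplacian} (Remark~\ref{remarks for theorem 2}(d)) to kill the resulting integrals, leaving the $f(\epsilon)$ term. The closing worry about a ``subprincipal contribution'' is a harmless red herring — nothing at the symbol level is used; the cancellation is handled entirely by the direct computation together with \eqref{L maps to orthogonal complement of kernel of Laplacian}.
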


\begin{proof}
Substituting
\eqref{perturbed massless Dirac operator on half-densities},
\eqref{unperturbed eigenfunction half-densities}
and
\eqref{unperturbed eigenfunction}
into
the LHS of \eqref{lemma about f eq 1}
and using Remark
\ref{remarks for theorem 2}(d),
we see that the terms with $(L_+)_k$ integrate to zero, which leaves us with
the RHS of \eqref{lemma about f eq 1}.
\end{proof}

Let us now expand the scalar function $f(\epsilon)$ in powers of our $\epsilon$,
\begin{equation}
\label{expansion for scalar function f}
f(\epsilon)=f^{(0)}+\epsilon f^{(1)}
+\epsilon^2f^{(2)}+\ldots.
\end{equation}
Here, of course, $f^{(0)}=f(0)=\frac32$.

Formulae
\eqref{lemma about f eq 1},
\eqref{expansion for Dirac half-densities}
and 
\eqref{expansion for scalar function f}
imply
\begin{equation*}
\bigl\langle\,
W_{1/2}^{(n)}\,v_{1/2}^{(0)}\,,v_{1/2}^{(0)}
\,\bigr\rangle
=
\bigl\langle\,
f^{(n)}\,v_{1/2}^{(0)}\,,v_{1/2}^{(0)}
\,\bigr\rangle
=
\frac1{2\pi^2}\int_{\mathbb{S}^3}f^{(n)}\,\rho_0\,dy\,,
\qquad n=0,1,\ldots.
\end{equation*}
Then formulae
\eqref{abstract formula for lambda1}
and
\eqref{abstract formula for lambda2}
become
\begin{align}
\label{abstract formula for lambda1 simplified}
\lambda_+^{(1)}&=
\frac1{2\pi^2}\int_{\mathbb{S}^3}f^{(1)}\,\rho_0\,dy\,,
\\
\label{abstract formula for lambda2 simplified}
\begin{split}
\lambda_+^{(2)}&=
\frac1{2\pi^2}\int_{\mathbb{S}^3}f^{(2)}\,\rho_0\,dy\,
-
\bigl\langle\,
\bigl(W_{1/2}^{(1)}-\lambda_+^{(1)}I\bigr)
Q_{1/2}
\bigl(W_{1/2}^{(1)}-\lambda_+^{(1)}I\bigr)
\,v_{1/2}^{(0)}\,,v_{1/2}^{(0)}
\,\bigr\rangle\\
&=\frac1{2\pi^2}\int_{\mathbb{S}^3}f^{(2)}\,\rho_0\,dy\,
-\bigl\langle\,
Q_{1/2}
\bigl(W_{1/2}^{(1)}-\lambda_+^{(1)}I\bigr)
\,v_{1/2}^{(0)}\,,\bigl(W_{1/2}^{(1)}-\lambda_+^{(1)}I\bigr)v_{1/2}^{(0)}
\,\bigr\rangle.
\end{split}
\end{align}

\section{Proof of Theorem \ref{theorem 1}}
\label{Proof of Theorem 1}

We prove Theorem \ref{theorem 1} for the case of upper signs.

We have
\begin{equation}
\label{expansion for rho}
\rho(\epsilon)=
\rho_0\left(
1+\frac\epsilon2h_{\alpha\beta}\,(g_0)^{\alpha\beta}+O(\epsilon^2)
\right).
\end{equation}
Using formulae
\eqref{expansion of coframe over Killing coframes},
\eqref{symmetry condition for coefficients d}
and
\eqref{coframe in terms of h},
we get
\begin{equation}
\label{expansion for expression with square brackets}
\begin{split}
&\delta_{kl}
\left[
e^k{}_{1}
\,\frac{\partial e^l{}_{3}}{\partial y^2}
+
e^k{}_{2}
\,\frac{\partial e^l{}_{1}}{\partial y^3}
+
e^k{}_{3}
\,\frac{\partial e^l{}_{2}}{\partial y^1}
-
e^k{}_{1}
\,\frac{\partial e^l{}_{2}}{\partial y^3}
-
e^k{}_{2}
\,\frac{\partial e^l{}_{3}}{\partial y^1}
-
e^k{}_{3}
\,\frac{\partial e^l{}_{1}}{\partial y^2}
\right]
\\
&\qquad
=6\rho_0
\left(
1+\frac\epsilon3(h_+)_{jj}+O(\epsilon^2)
\right)
=6\rho_0
\left(
1+\frac\epsilon3 h_{\alpha\beta}\,(g_0)^{\alpha\beta}+O(\epsilon^2)
\right).
\end{split}
\end{equation}
Substitution of
\eqref{expansion for rho}
and
\eqref{expansion for expression with square brackets}
into
\eqref{subprincipal2}
gives us
\begin{equation}
\label{formula for f1}
f^{(1)}=-\frac14
h_{\alpha\beta}\,(g_0)^{\alpha\beta}.
\end{equation}

Finally, substituting
\eqref{formula for f1}
into
\eqref{abstract formula for lambda1 simplified}
and using
\eqref{formula for V1},
we arrive at
\eqref{formula for lambda1}.
\qed

\section{Proof of Theorem \ref{theorem 2}}
\label{Proof of Theorem 2}

We prove Theorem \ref{theorem 2} for the case of upper signs.

Recall also that we are proving this theorem under the
assumption \eqref{shear}. This implies, in particular, that
\begin{equation}
\label{lambda 1 is zero}
\lambda_+^{(1)}=0.
\end{equation}
With account of \eqref{lambda 1 is zero}, in order to use formula \eqref{abstract formula for lambda2 simplified}
we require the expressions for the scalar function $f^{(2)}$
and for
$W_{1/2}^{(1)}\,v_{1/2}^{(0)}\,$.

Substituting
\eqref{expansion of coframe over Killing coframes}
and
\eqref{coframe in terms of h}
into
\eqref{subprincipal2}
and using
\eqref{shear}
and
\eqref{symmetry condition for coefficients d},
we get
\begin{align}
\label{subprincipal98}
f^{(1)}&=0,
\\
\label{subprincipal99}
f^{(2)}&=
\frac14(h_+)_{jk}(h_+)_{jk}
-
\frac1{16}\varepsilon_{qks}(h_+)_{jq}\left[(L_+)_s(h_+)_{jk}\right].
\end{align}

Examination of formulae
\eqref{perturbed massless Dirac operator on half-densities},
\eqref{frame in terms of h}
and
\eqref{subprincipal98}
gives us
the explicit formula for
the action of the operator $W_{1/2}^{(1)}$\,:
\begin{equation}
\label{perturbed massless Dirac operator on half-densities 1}
v_{1/2}\mapsto 
\frac i4s^j\,
(\rho_0)^{1/2}
\left[
(h_+)_{jk}(L_+)_k
+
(L_+)_k(h_+)_{jk}
\right]
(\rho_0)^{-1/2}\,v_{1/2}.
\end{equation}
Acting with the operator
\eqref{perturbed massless Dirac operator on half-densities 1}
on the eigenfunction
\eqref{unperturbed eigenfunction half-densities}
of the unperturbed massless Dirac operator on half-densities,
we obtain
\begin{equation}
\label{perturbed massless Dirac operator on half-densities 2}
W_{1/2}^{(1)}\,v_{1/2}^{(0)}=
\frac i4
(\rho_0)^{1/2}\,
s^j\,
v^{(0)}
\left[
(L_+)_k(h_+)_{jk}
\right].
\end{equation}
Using formula
\eqref{perturbed massless Dirac operator on half-densities 2},
we get
\begin{equation}
\label{explicit formula for lambda2 hat auxiliary 4}
\begin{split}
&-
\bigl\langle\,
Q_{1/2}\,
W_{1/2}^{(1)}
\,v_{1/2}^{(0)}\,,
W_{1/2}^{(1)}\,v_{1/2}^{(0)}
\,\bigr\rangle
\\
&=
-\frac1{16}\int_{\mathbb{S}^3}
\bigl[(L_+)_r(h_+)_{qr}\bigr]
\left(
\left[\bigl[v^{(0)}\bigr]^*s^q\,(\rho_0)^{-1/2}\,Q_{1/2}\,(\rho_0)^{1/2}\,s^j\,v^{(0)}\right]
\bigl[(L_+)_k(h_+)_{jk}\bigr]
\right)
\rho_0\,dy\,.
\end{split}
\end{equation}
But $\,(\rho_0)^{-1/2}\,Q_{1/2}\,(\rho_0)^{1/2}=Q\,$,
the pseudoinverse of the operator
$W^{(0)}-\frac32I$.
Hence, formula \eqref{explicit formula for lambda2 hat auxiliary 4}
simplifies and reads now
\begin{equation}
\label{explicit formula for lambda2 hat auxiliary 4a}
\begin{split}
&-
\bigl\langle\,
Q_{1/2}\,
W_{1/2}^{(1)}
\,v_{1/2}^{(0)}\,,
W_{1/2}^{(1)}\,v_{1/2}^{(0)}
\,\bigr\rangle
\\
&=
-\frac1{16}\int_{\mathbb{S}^3}
\bigl[(L_+)_r(h_+)_{qr}\bigr]
\left(
\left[\bigl[v^{(0)}\bigr]^*s^q\,Q\,s^j\,v^{(0)}\right]
\bigl[(L_+)_k(h_+)_{jk}\bigr]
\right)
\rho_0\,dy\,.
\end{split}
\end{equation}

Observe now that we have the identity
\begin{equation}
\label{identity for the square of Dirac}
\left(
W^{(0)}-\frac12I
\right)^2
=(-\Delta+1)I,
\end{equation}
where $I$ is the $2\times2$ identity matrix
and $\Delta$ is the Laplacian on scalar fields over $\mathbb{S}^3$
with standard metric.
Formula
\eqref{identity for the square of Dirac}
can be established by direct substitution
of \eqref{unperturbed massless Dirac operator}
and the use of the commutator formula \eqref{commutator identities}.
Formula
\eqref{identity for the square of Dirac}
appears also as Lemma 2 in \cite{bar_1996}.

Formula \eqref{identity for the square of Dirac} implies
\begin{equation}
\label{relation between two pseudoinverses}
Q=(-\Delta)^{-1}
\left(
W^{(0)}+\frac12I
\right)
=
(-\Delta)^{-1}
\left(
-is^l(L_+)_l+2I
\right).
\end{equation}
Formula \eqref{relation between two pseudoinverses},
in turn, gives us the following representation for the scalar pseudodifferential
operator $\bigl[v^{(0)}\bigr]^*s^q\,Q\,s^j\,v^{(0)}$:
\begin{equation}
\label{complicated scalar operator}
\begin{split}
&\bigl[v^{(0)}\bigr]^*s^q\,Q\,s^j\,v^{(0)}
\\
&=
2
\left(
\bigl[v^{(0)}\bigr]^*s^q\,s^j\,v^{(0)}
\right)
(-\Delta)^{-1}
-i
\left(
\bigl[v^{(0)}\bigr]^*s^q\,s^l\,s^j\,v^{(0)}
\right)
(-\Delta)^{-1}(L_+)_l\,.
\end{split}
\end{equation}

Substituting
\eqref{complicated scalar operator}
into
\eqref{explicit formula for lambda2 hat auxiliary 4a},
we get
\begin{equation}
\label{explicit formula for lambda2 hat auxiliary 5}
\begin{split}
-
\bigl\langle\,
&Q_{1/2}\,
W_{1/2}^{(1)}
\,v_{1/2}^{(0)}\,,
W_{1/2}^{(1)}\,v_{1/2}^{(0)}
\,\bigr\rangle\\
&=
-\frac18
\left(
\bigl[v^{(0)}\bigr]^*s^q\,s^j\,v^{(0)}
\right)
\int_{\mathbb{S}^3}
\bigl[(L_+)_r(h_+)_{qr}\bigr]
\bigl[
(-\Delta)^{-1}(L_+)_k(h_+)_{jk}\bigr]
\rho_0\,dy
\\
&+\frac1{16}
\left(
i
\bigl[v^{(0)}\bigr]^*s^q\,s^l\,s^j\,v^{(0)}
\right)
\int_{\mathbb{S}^3}
\bigl[(L_+)_r(h_+)_{qr}\bigr]
\bigl[(-\Delta)^{-1}(L_+)_l(L_+)_k(h_+)_{jk}\bigr]
\rho_0\,dy
\\
&=
\frac18
\operatorname{Re}
\left(
\bigl[v^{(0)}\bigr]^*s^q\,s^j\,v^{(0)}
\right)
\int_{\mathbb{S}^3}
(h_+)_{qr}
\bigl[
(-\Delta)^{-1}(L_+)_r(L_+)_k(h_+)_{jk}\bigr]
\rho_0\,dy
\\
&-\frac1{16}
\operatorname{Re}
\left(
i
\bigl[v^{(0)}\bigr]^*s^q\,s^l\,s^j\,v^{(0)}
\right)
\int_{\mathbb{S}^3}
(h_+)_{qr}
\bigl[(-\Delta)^{-1}(L_+)_r(L_+)_l(L_+)_k(h_+)_{jk}\bigr]
\rho_0\,dy\,.
\end{split}
\end{equation}

But
\[
\begin{split}
\operatorname{Re}
\left(
\bigl[v^{(0)}\bigr]^*s^q\,s^j\,v^{(0)}
\right)
&=
\frac12
\left(
\bigl[v^{(0)}\bigr]^*
\bigl(s^qs^j+s^js^q\bigr)
v^{(0)}
\right)\\
&=
\delta^{qj}
\left(
\bigl[v^{(0)}\bigr]^*
\,I\,
v^{(0)}
\right)
=
\frac1{2\pi^2}\,
\delta^{qj},
\\
\operatorname{Re}
\left(
i
\bigl[v^{(0)}\bigr]^*s^q\,s^l\,s^j\,v^{(0)}
\right)
&=
\frac i2
\left(
\bigl[v^{(0)}\bigr]^*
\bigl(s^qs^ls^j-s^js^ls^q\bigr)
v^{(0)}
\right)\\
&=-\varepsilon^{qlj}
\left(
\bigl[v^{(0)}\bigr]^*
\,I\,
v^{(0)}
\right)
=-\frac1{2\pi^2}\,\varepsilon^{qlj},
\end{split}
\]
where we made use of \eqref{unperturbed eigenfunction}.
Hence, formula \eqref{explicit formula for lambda2 hat auxiliary 5}
simplifies and reads now
\begin{equation}
\label{explicit formula for lambda2 hat auxiliary 6}
\begin{split}
-
\bigl\langle\,
&Q_{1/2}\,
W_{1/2}^{(1)}
\,v_{1/2}^{(0)}\,,
W_{1/2}^{(1)}\,v_{1/2}^{(0)}
\,\bigr\rangle
=
\frac1{16\pi^2}
\int_{\mathbb{S}^3}
(h_+)_{jr}
\bigl[
(-\Delta)^{-1}(L_+)_r(L_+)_k(h_+)_{jk}\bigr]
\rho_0\,dy
\\
&\qquad+\frac1{32\pi^2}\,
\varepsilon_{qlj}
\int_{\mathbb{S}^3}
(h_+)_{qr}
\bigl[(-\Delta)^{-1}(L_+)_r(L_+)_l(L_+)_k(h_+)_{jk}\bigr]
\rho_0\,dy\,.
\end{split}
\end{equation}

Substituting
\eqref{subprincipal99}
and
\eqref{explicit formula for lambda2 hat auxiliary 6}
into
\eqref{abstract formula for lambda2 simplified}
we arrive at
\eqref{formula for lambda2},
\eqref{formula for lambda2 quadratic form}
with upper signs.
\qed

\section{Generalized Berger spheres}
\label{Generalized Berger spheres}

A left-handed generalized Berger sphere is a 3-sphere equipped with metric
\begin{equation}
\label{generalized Berger metric 1}
g_{\alpha\beta}=
C_{jk}(K_+)^j{}_\alpha (K_+)^k{}_\beta\,,
\end{equation}
where $C=(C_{jk})_{j,k=1}^3$ is a
\textbf{constant} $3\times3$ positive real symmetric matrix
and  $(K_+)^j{}_\alpha$, $j=1,2,3$, are our special covector fields defined in
accordance with Section~\ref{Main results} and formula
\eqref{relation between standard frame and coframe}.
One can, of course, define in a similar fashion right-handed generalized Berger spheres:
these involve the covector fields $(K_-)^j{}_\alpha$, $j=1,2,3$.
However, in this paper, as in \cite{hitchin}, we restrict our analysis to left-handed ones.

One can always perform a rotation in $\mathbb{R}^4$ so that
\eqref{generalized Berger metric 1} turns to
\begin{equation}
\label{generalized Berger metric 2}
g_{\alpha\beta}=
\sum_{j=1}^3a_j^2(K_+)^j{}_\alpha (K_+)^j{}_\beta
\,,
\end{equation}
where $a_j$, $j=1,2,3$, are some positive constants.
In formula \eqref{generalized Berger metric 2} the $(K_+)^j{}_\alpha$, $j=1,2,3$,
are new covector fields defined in the new Cartesian coordinate system
in accordance with formulae
\eqref{Killing vector fields in R4}
and
\eqref{relation between standard frame and coframe}.
Of course, $a_j^2$ are the eigenvalues of the matrix $C$.
Further on we assume that our generalized Berger metric has the form
\eqref{generalized Berger metric 2}.

To the authors' knowledge, metrics of the type 
\eqref{generalized Berger metric 2}
were first considered in Section 3 of \cite{hitchin}.
The expression ``generalized Berger sphere'' first appears in \cite{godbout}.
The standard (as opposed to the generalized) Berger sphere
corresponds to the case $a_2=a_3=1$,
and the standard sphere corresponds to the case $a_1=a_2=a_3=1$.

For future reference, let us give the formula for
the Riemannian volume \eqref{definition of volume}
of the generalized Berger sphere:
\begin{equation}
\label{volume generalized Berger sphere}
V=2\pi^2a_1a_2a_3\,.
\end{equation}

\subsection{Dirac operator on generalized Berger spheres}
\label{Dirac operator on generalized Berger spheres}
The remarkable feature of generalized Berger spheres is that for these metrics
the calculation of eigenvalues of the (massless) Dirac operator reduces to finding
roots of polynomials.

The Dirac operator
\eqref{perturbed massless Dirac operator}
corresponding to the generalized Berger metric reads
\begin{equation}
\label{Dirac for generalized Berger metric}
W=
-i
\sum_{j=1}^3
\frac1{a_j}s^j(L_+)_j
+
\nu I\,,
\end{equation}
where
\begin{equation}
\label{lowest positive eigenvalue for generalized Berger metric}
\nu=\frac{a_1^2+a_2^2+a_3^2}{2a_1a_2a_3}\,.
\end{equation}
In writing \eqref{Dirac for generalized Berger metric}
we followed the convention of choosing the symmetric gauge,
see formulae
\eqref{symmetry condition}
and
\eqref{symmetry condition for coefficients d}.
The constant \eqref{lowest positive eigenvalue for generalized Berger metric}
was written down by means of a careful application of formula
\eqref{subprincipal2}.

Note that formula \eqref{Dirac for generalized Berger metric}
appears also in Proposition 3.1 of \cite{hitchin}.

Examination of formula
\eqref{Dirac for generalized Berger metric}
shows that $\lambda=\nu$
is an eigenvalue of the Dirac operator,
with the corresponding eigenspinors being constant spinors.

In order to calculate other eigenvalues of the Dirac operator
it is convenient to extend our spinor field from $\mathbb{S}^3$ to a neighbourhood
of $\mathbb{S}^3$ in $\mathbb{R}^4$ and rewrite the operator in Cartesian
coordinates. Substituting
\eqref{Killing vector fields in R4}
into
\eqref{Dirac for generalized Berger metric},
we get
\begin{equation}
\label{Dirac for generalized Berger metric Cartesian 1}
\mathbf{W}=
-i
\sum_{j=1}^3
\frac1{a_j}s^j(\mathbf{L}_+)_j
+
\nu I\,,
\end{equation}
where
\begin{equation}
\label{Dirac for generalized Berger metric Cartesian 2}
\begin{aligned}
(\mathbf{L}_+)_1&=
-\mathbf{x}^4\partial_1-\mathbf{x}^3\partial_2+\mathbf{x}^2\partial_3+\mathbf{x}^1\partial_4
\,,
\\
(\mathbf{L}_+)_2&=
{\ \ }
\mathbf{x}^3\partial_1-\mathbf{x}^4\partial_2-\mathbf{x}^1\partial_3+\mathbf{x}^2\partial_4
\,,
\\
(\mathbf{L}_+)_3&=
-\mathbf{x}^2\partial_1+\mathbf{x}^1\partial_2-\mathbf{x}^4\partial_3+\mathbf{x}^3\partial_4
\,.
\end{aligned}
\end{equation}
Here the way to work with the Cartesian representation of the Dirac operator
is to act with
\eqref{Dirac for generalized Berger metric Cartesian 1},
\eqref{Dirac for generalized Berger metric Cartesian 2}
on a spinor field defined in a neighbourhood
of $\mathbb{S}^3$ and then restrict the result to \eqref{definition of 3-sphere}.
It is easy to see that under this procedure
the resulting spinor field on $\mathbb{S}^3$ does not depend on the way 
we extended our original spinor field from $\mathbb{S}^3$ to a neighbourhood
of $\mathbb{S}^3$ in $\mathbb{R}^4$.

The operators
\eqref{Dirac for generalized Berger metric Cartesian 2}
commute with the scalar Laplacian in $\mathbb{R}^4$.
This implies that these operators map homogeneous harmonic polynomials of degree $k$
to homogeneous harmonic polynomials of degree less than or equal to $k$.
Hence, the eigenspinors of the Dirac operator can be written in terms of
homogeneous harmonic polynomials.
Of course, the restriction of
homogeneous harmonic polynomials to the 3-sphere \eqref{definition of 3-sphere}
gives spherical functions, but we find working with polynomials in Cartesian coordinates
more convenient than working with spherical functions
in spherical coordinates \eqref{spherical coordinates for 3-sphere}.

Let us seek an eigenspinor which is linear in Cartesian coordinates
$\mathbf{x}^{\boldsymbol\alpha}$, $\boldsymbol\alpha=1,2,3,4$.
Such an eigenspinor is determined by eight complex constants and finding
the corresponding eigenvalues reduces to finding the eigenvalues of a particular
$8\times8$ Hermitian matrix. Explicit calculations (which we omit for the sake of brevity)
show that the characteristic polynomial
of this $8\times8$ Hermitian matrix is the square of a polynomial
of degree four whose roots are
\begin{equation}
\label{lowest negative eigenvalue for generalized Berger metric}
\nu-\frac1{a_1}-\frac1{a_2}-\frac1{a_3}\,,
\end{equation}
\begin{equation}
\label{lowest negative eigenvalue for generalized Berger metric extra 1}
\nu-\frac1{a_1}+\frac1{a_2}+\frac1{a_3}\,,
\end{equation}
\begin{equation}
\label{lowest negative eigenvalue for generalized Berger metric extra 2}
\nu+\frac1{a_1}-\frac1{a_2}+\frac1{a_3}\,,
\end{equation}
\begin{equation}
\label{lowest negative eigenvalue for generalized Berger metric extra 3}
\nu+\frac1{a_1}+\frac1{a_2}-\frac1{a_3}\,.
\end{equation}

One can repeat the above procedure for
homogeneous harmonic polynomials of degree $n=2,3,\ldots$,
thus reducing the problem of finding eigenvalues of the Dirac
operator on a generalized Berger sphere to finding roots of polynomials.
See Appendix
\ref{Explicit formulae for some eigenvalues of Dirac operator on generalized Berger spheres}
for further details.

Note that for the standard Berger sphere ($a_2=a_3=1$)
the spectrum of the Dirac operator was previously calculated in \cite{bar_1992,bar_1996a}.

\subsection{Testing Theorem~\ref{theorem 1} on generalized Berger spheres}
\label{Testing Theorem 1 on generalized Berger spheres}
From now on we will assume that the positive constants
$a_j$ are close to 1.
This assumption will allow us to identify the lowest, in terms of modulus,
positive and negative eigenvalues of the Dirac operator.

When $a_1=a_2=a_3=1$
the expression
\eqref{lowest positive eigenvalue for generalized Berger metric}
takes the value $+\frac32\,$
and
the expression
\eqref{lowest negative eigenvalue for generalized Berger metric}
takes the value $-\frac32\,$.
Hence,
\begin{equation}
\label{lowest positive eigenvalue for generalized Berger metric with lambda}
\lambda_+=\nu
\end{equation}
is the lowest positive eigenvalue of the Dirac operator
and
\begin{equation}
\label{lowest negative eigenvalue for generalized Berger metric with lambda}
\lambda_-=\nu-\frac1{a_1}-\frac1{a_2}-\frac1{a_3}
\end{equation}
is the lowest, in terms of modulus,
negative eigenvalue of the Dirac operator.
Recall that $\nu$ is given by formula
\eqref{lowest positive eigenvalue for generalized Berger metric}.
As for the expressions
\eqref{lowest negative eigenvalue for generalized Berger metric extra 1}--\eqref{lowest negative eigenvalue for generalized Berger metric extra 3},
their values are close to $+\frac52\,$.

In this subsection and the next one we assume that the constants $a_j$ appearing in
formula \eqref{generalized Berger metric 2} are smooth functions of the small parameter
$\epsilon$ and that $a_j(0)=1$.

Expanding
\eqref{volume generalized Berger sphere},
\eqref{lowest positive eigenvalue for generalized Berger metric with lambda}
and
\eqref{lowest negative eigenvalue for generalized Berger metric with lambda}
in powers of $\epsilon$, we get
\begin{equation}
\label{volume generalized Berger sphere expansion}
V(\epsilon)=2\pi^2\left(1+(a_1'+a_2'+a_3')\epsilon+O(\epsilon^2)\right),
\end{equation}
\begin{equation}
\label{lowest eigenvalue for generalized Berger metric expansion order 1}
\lambda_\pm(\epsilon)=\pm\frac32\mp\frac12(a_1'+a_2'+a_3')\epsilon+O(\epsilon^2)\,,
\end{equation}
where
\begin{equation*}
a_j':=\left.\frac{da_j}{d\epsilon}\right|_{\epsilon=0}\,.
\end{equation*}
Formulae
\eqref{asymptotic expansion for smallest eigenvalues},
\eqref{asymptotic expansion for volume},
\eqref{formula for V0},
\eqref{volume generalized Berger sphere expansion}
and
\eqref{lowest eigenvalue for generalized Berger metric expansion order 1}
imply \eqref{formula for lambda1}. Thus, we are in agreement with
Theorem~\ref{theorem 1}.

\subsection{Testing Theorem~\ref{theorem 2} on generalized Berger spheres}
\label{Testing Theorem 2 on generalized Berger spheres}

In this subsection we make the additional assumption
\begin{equation}
\label{abs additional assumption}
a_1(\epsilon)\,a_2(\epsilon)\,a_3(\epsilon)=1\,,
\end{equation}
which ensures the preservation of Riemannian volume
\eqref{volume generalized Berger sphere}
under perturbations.
But generalized Berger spheres are homogeneous Riemannian spaces,
so preservation of Riemannian volume is equivalent
to preservation of Riemannian density.
Hence, \eqref{abs additional assumption}
implies \eqref{shear},
which is required for testing Theorem~\ref{theorem 2}.

For future reference note that formula
\eqref{abs additional assumption}
implies
\begin{equation}
\label{abs additional assumption consequence order 1}
a_1'+a_2'+a_3'=0\,,
\end{equation}
\begin{equation}
\label{abs additional assumption consequence order 2}
a_1''+a_2''+a_3''+2(a_1'a_2'+a_2'a_3'+a_3'a_1')=0\,,
\end{equation}
where
\begin{equation}
\label{abs double prime}
a_j'':=\left.\frac{d^2a_j}{d\epsilon^2}\right|_{\epsilon=0}\,.
\end{equation}

Expanding
\eqref{lowest positive eigenvalue for generalized Berger metric with lambda}
and
\eqref{lowest negative eigenvalue for generalized Berger metric with lambda}
in powers of $\epsilon$
and using formulae
\eqref{abs additional assumption consequence order 1}
and
\eqref{abs additional assumption consequence order 2},
we get
\begin{equation}
\label{lowest positive eigenvalue for generalized Berger metric expansion order 2}
\lambda_+(\epsilon)=\frac32
+\left(
(a_1')^2+(a_2')^2+(a_3')^2
\right)
\epsilon^2
+O(\epsilon^3)\,,
\end{equation}
\begin{equation}
\label{lowest negative eigenvalue for generalized Berger metric expansion order 2}
\lambda_-(\epsilon)=-\frac32
+\frac12
\left(
(a_1')^2+(a_2')^2+(a_3')^2
\right)
\epsilon^2
+O(\epsilon^3)\,.
\end{equation}
Note that the second derivatives
\eqref{abs double prime}
do not appear in formulae
\eqref{lowest positive eigenvalue for generalized Berger metric expansion order 2}
and
\eqref{lowest negative eigenvalue for generalized Berger metric expansion order 2},
which is in agreement with
Remark~\ref{remarks for theorem 2}(j).

We first test whether formula
\eqref{lowest positive eigenvalue for generalized Berger metric expansion order 2}
agrees with Theorem~\ref{theorem 2}.
Calculating the scalars \eqref{definition of tensor h frame} with upper sign, we get
\begin{equation}
\label{h plus Berger}
(h_+)_{jk}=
2\sum_{l=1}^3
a_l'\,\delta_{lj}\,\delta_{lk}\,.
\end{equation}
Formulae
\eqref{formula for lambda2 quadratic form}
and
\eqref{h plus Berger}
imply
\begin{equation}
\label{Q plus Berger}
P_+=
(a_1')^2+(a_2')^2+(a_3')^2.
\end{equation}
Substituting
\eqref{Q plus Berger}
into
\eqref{formula for lambda2}
and using
\eqref{formula for V0},
we get
$\lambda_+^{(2)}=(a_1')^2+(a_2')^2+(a_3')^2$,
which is in agreement with
\eqref{lowest positive eigenvalue for generalized Berger metric expansion order 2}.

In the remainder of this subsection we test whether formula
\eqref{lowest negative eigenvalue for generalized Berger metric expansion order 2}
agrees with Theorem~\ref{theorem 2}. This is trickier because
the scalar fields $(h_-)_{jk}$
are not constant.

Consider the matrix-function 
\begin{equation*}
\begin{pmatrix}
(\mathbf{x}^1)^2-(\mathbf{x}^2)^2-(\mathbf{x}^3)^2+(\mathbf{x}^4)^2&
2(\mathbf{x}^1\mathbf{x}^2-\mathbf{x}^3\mathbf{x}^4)&
2(\mathbf{x}^1\mathbf{x}^3+\mathbf{x}^2\mathbf{x}^4)\\
2(\mathbf{x}^1\mathbf{x}^2+\mathbf{x}^3\mathbf{x}^4)&
-(\mathbf{x}^1)^2+(\mathbf{x}^2)^2-(\mathbf{x}^3)^2+(\mathbf{x}^4)^2&
2(\mathbf{x}^2\mathbf{x}^3-\mathbf{x}^1\mathbf{x}^4)\\
2(\mathbf{x}^1\mathbf{x}^3-\mathbf{x}^2\mathbf{x}^4)&
2(\mathbf{x}^1\mathbf{x}^4+\mathbf{x}^2\mathbf{x}^3)&
-(\mathbf{x}^1)^2-(\mathbf{x}^2)^2+(\mathbf{x}^3)^2+(\mathbf{x}^4)^2
\end{pmatrix}
\end{equation*}
whose elements are homogeneous harmonic quadratic polynomials.
Let $O$ be the restriction of the above matrix-function
to the 3-sphere \eqref{definition of 3-sphere}.
Note that the matrix-function $O$ is orthogonal.
Let us denote the elements the matrix-function $O$ by $O_{jk}$,
with the first subscript enumerating rows and the second enumerating columns.
The two sets of scalar fields, $(h_+)_{jk}$ and $(h_-)_{jk}\,$,
are related as
\begin{equation}
\label{h minus in terms of h plus without matrix notation}
(h_-)_{il}=
O_{ij}
(h_+)_{jk}
O_{lk}\,.
\end{equation}
Substitution of \eqref{h plus Berger}
into
\eqref{h minus in terms of h plus without matrix notation}
gives us explicit formulae for the scalar fields $(h_-)_{jk}\,$.

We now need to substitute
\eqref{h minus in terms of h plus without matrix notation}
into the formula for $P_-\,$, see
\eqref{formula for lambda2 quadratic form}.

Observe that the (spherical) functions $O_{jk}$ satisfy the identity
\begin{equation}
\label{identity for spherical functions}
(L_-)_i\,O_{jk}=2\varepsilon_{ijl}\,O_{lk}\,.
\end{equation}
Formulae
\eqref{h minus in terms of h plus without matrix notation}
and
\eqref{identity for spherical functions}
and the fact that the matrix of constants $(h_+)_{jk}$ is symmetric
imply $(L_-)_s(L_-)_j(h_-)_{jk}=0$, so the last two terms in the RHS of
\eqref{formula for lambda2 quadratic form}
vanish, giving us
\begin{equation}
\label{auxiliary 1}
P_-=-\frac14(h_-)_{jk}(h_-)_{jk}
-\frac1{16}\varepsilon_{qks}(h_-)_{jq}\left[(L_-)_s(h_-)_{jk}\right].
\end{equation}

We examine the two terms in the RHS of
\eqref{auxiliary 1} separately.
As the matrix $O$ is orthogonal, we have, with account of \eqref{h plus Berger},
\begin{equation}
\label{auxiliary 2}
-\frac14(h_-)_{jk}(h_-)_{jk}
=
-\frac14(h_+)_{jk}(h_+)_{jk}
=
-\bigl[(a_1')^2+(a_2')^2+(a_3')^2\bigr].
\end{equation}
The other term is evaluated by substituting
\eqref{h minus in terms of h plus without matrix notation},
using the identity
\eqref{identity for spherical functions}
and the fact that our perturbation of the metric is pointwise trace-free
$(h_\pm)_{jj}=0$,
which gives us
\begin{equation}
\label{auxiliary 3}
-\frac1{16}\varepsilon_{qks}(h_-)_{jq}\left[(L_-)_s(h_-)_{jk}\right]
=
\frac38(h_+)_{jk}(h_+)_{jk}
=
\frac32\bigl[(a_1')^2+(a_2')^2+(a_3')^2\bigr].
\end{equation}
Substituting
\eqref{auxiliary 2}
and
\eqref{auxiliary 3}
into the RHS of
\eqref{auxiliary 1},
we arrive at
\begin{equation}
\label{Q minus Berger}
P_-=
\frac12
\bigl[
(a_1')^2+(a_2')^2+(a_3')^2
\bigr].
\end{equation}
Substituting
\eqref{Q minus Berger}
into
\eqref{formula for lambda2}
and using
\eqref{formula for V0},
we get
$\lambda_-^{(2)}=\frac12[(a_1')^2+(a_2')^2+(a_3')^2]$,
which is in agreement with
\eqref{lowest negative eigenvalue for generalized Berger metric expansion order 2}.

\section*{Acknowledgments}

The authors are grateful to Jason Lotay for advice regarding Berger spheres.


\begin{appendices}

\section{Orientation}
\label{Orientation}

The unit 3-sphere, $\mathbb{S}^3$, is the hypersurface in $\mathbb{R}^4$ defined
by the equation
\begin{equation}
\label{definition of 3-sphere}
\|\mathbf{x}\|=1,
\end{equation}
where $\|\cdot\|$ is the standard Euclidean norm.
Spherical coordinates
\begin{equation}
\label{spherical coordinates for 3-sphere}
\begin{pmatrix}
\mathbf{x}^1\\
\mathbf{x}^2\\
\mathbf{x}^3\\
\mathbf{x}^4
\end{pmatrix}
=
\begin{pmatrix}
\cos y^1\\
\sin y^1\cos y^2\\
\sin y^1\sin y^2\cos y^3\\
\sin y^1\sin y^2\sin y^3\\
\end{pmatrix},
\quad
y^1,y^2\in(0,\pi),
\quad
y^3\in[0,2\pi),
\end{equation}
are an example of
local coordinates on $\mathbb{S}^3$.
We define the orientation of spherical
coordinates \eqref{spherical coordinates for 3-sphere}
to be positive.

\section{The Dirac operator}
\label{The Dirac operator}

\subsection{Classical geometric definition}
\label{Classical geometric definition}

Unlike the rest of the paper, in this subsection we work in a more general setting.
Namely, we do not assume our base manifold to be 3-dimensional.

The material presented in this subsection can be found in many classical books on spin geometry. We follow the notation from \cite{Lawson and Michelsohn}. Let $X$ be an $m$-dimensional connected manifold and $E$ be an $n$-dimensional oriented Riemannian vector bundle with a spin structure. Recall that a complex spin bundle of $E$ is given by $S_{\mathbb{C}}(E)=P_{\mathrm{Spin}}(E)\times_\mu M_{\mathbb{C}}$, where $P_{\mathrm{Spin}}(E)$ is the principal $\mathrm{Spin}_n$ bundle associated with $E$, $M_{\mathbb{C}}$ is an $N$-dimensional left complex module for $\mathbb{C}l(\mathbb{R}^n)=Cl(\mathbb{R}^n)\otimes \mathbb{C}$ and $\mu:\mathrm{Spin}_n \mapsto \mathrm{SO}(M_{\mathbb{C}})$ is the representation induced by left multiplication by elements of $\mathrm{Spin}_n\subset Cl^0(\mathbb{R}^n)\subset \mathbb{C}l(\mathbb{R}^n)$. 
\begin{theorem}[{\cite[Chapter II, Section 4]{Lawson and Michelsohn}}]
	\label{Lawson and Michelsohn thm}
Let $\omega$ be a connection $1$-form on the bundle of $P_{\mathrm{SO}}(E)$ oriented orthonormal bases of $E$, which can be expressed as $\omega=\sum_{i<j}\omega_{ij}\,e_i\wedge e_j$. Here $e_i\wedge e_j$ is the elementary skew-symmetric $(i,j)$ matrix. Then the covariant derivative $\nabla^s$ on $S(E)$ is given locally by the formula
\begin{equation}
\label{Lawson's covariant derivatives 1}
\nabla^s b_\alpha=\frac{1}{2} \sum_{i<j}\tilde{\omega}_{ij}\otimes e_i e_j \cdot b_\alpha\,,
\end{equation}
where $\mathscr{E}=(e_1,\dots,e_n)$ is a local section of $P_{\mathrm{SO}}(E)$ on $U$, $\tilde{\omega}=\mathscr{E}^*(\omega)$, and $(b_1,\dots,b_N)$ is a local section of $P_{\mathrm{SO}}(S(E))$.
\end{theorem}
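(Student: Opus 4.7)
The plan is to derive the stated local formula by lifting the $\mathrm{SO}_n$-connection $\omega$ to a connection on $P_{\mathrm{Spin}}(E)$ via the covering homomorphism, and then applying the standard formula for the covariant derivative on an associated vector bundle. First I would invoke the double cover $\rho:\mathrm{Spin}_n\to\mathrm{SO}_n$, whose differential $d\rho:\mathfrak{spin}_n\to\mathfrak{so}_n$ is a Lie algebra isomorphism. The spin structure supplies a principal $\mathrm{Spin}_n$-bundle $P_{\mathrm{Spin}}(E)$ double-covering $P_{\mathrm{SO}}(E)$, and standard bundle theory produces a unique lifted connection $\omega^{\mathrm{Spin}}$ on $P_{\mathrm{Spin}}(E)$ whose push-forward under $\rho$ equals $\omega$; locally this is just $\omega^{\mathrm{Spin}}=(d\rho)^{-1}(\pi^*\omega)$, where $\pi$ is the covering map of principal bundles.

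The next step is to compute the inverse Lie algebra isomorphism explicitly on the basis elements $e_i\wedge e_j$ of $\mathfrak{so}_n$. Viewing $\mathfrak{spin}_n\subset Cl^0(\mathbb{R}^n)$ as the span of $\{e_ie_j:i<j\}$, a direct manipulation using the Clifford relations $e_ie_j+e_je_i=-2\delta_{ij}$ shows that the adjoint action of $\tfrac12 e_ie_j$ on any $v\in\mathbb{R}^n\subset\mathbb{C}l(\mathbb{R}^n)$ agrees with the action of $e_i\wedge e_j\in\mathfrak{so}_n$ on $v\in\mathbb{R}^n$. Consequently $d\rho\bigl(\tfrac12 e_ie_j\bigr)=e_i\wedge e_j$, which is the origin of the universal factor $\tfrac12$ in the claimed formula.

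Finally, one applies the general associated-bundle formula: if $\tilde{\mathscr{E}}$ is a local lift of $\mathscr{E}$ to $P_{\mathrm{Spin}}(E)$ and we write $b_\alpha=[\tilde{\mathscr{E}},v_\alpha]$ with $v_\alpha\in M_{\mathbb{C}}$, then $\nabla^s b_\alpha=[\tilde{\mathscr{E}},d\mu(\tilde{\mathscr{E}}^*\omega^{\mathrm{Spin}})\,v_\alpha]$. Substituting $\tilde{\mathscr{E}}^*\omega^{\mathrm{Spin}}=(d\rho)^{-1}\bigl(\textstyle\sum_{i<j}\tilde\omega_{ij}\,e_i\wedge e_j\bigr)=\tfrac12\sum_{i<j}\tilde\omega_{ij}\,e_ie_j$ and using that $d\mu$ is realised by Clifford left multiplication on $M_{\mathbb{C}}$ produces exactly the stated expression for $\nabla^s b_\alpha$. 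The only delicate point — and the step I expect to be the main obstacle — is the Lie-algebraic computation in the second paragraph: verifying $d\rho(\tfrac12 e_ie_j)=e_i\wedge e_j$ demands careful bookkeeping of signs in the Clifford relations and of the convention fixing the embedding $\mathfrak{spin}_n\hookrightarrow Cl^0(\mathbb{R}^n)$. Once this is pinned down, the remainder of the argument is essentially formal.
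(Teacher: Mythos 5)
The paper does not actually prove this statement: it is imported verbatim from Lawson and Michelsohn, \emph{Spin Geometry}, Chapter~II, Section~4, as the citation in the theorem header indicates, so there is no ``paper's own proof'' to compare against. Your reconstruction is nonetheless the standard argument one finds in that reference and is essentially correct: lift the $\mathfrak{so}_n$-valued connection $1$-form through the covering $\rho:\mathrm{Spin}_n\to\mathrm{SO}_n$ using that $d\rho$ is a Lie algebra isomorphism, verify the basis identity $d\rho\bigl(\tfrac12 e_ie_j\bigr)=e_i\wedge e_j$ by computing $\mathrm{ad}\bigl(\tfrac12 e_ie_j\bigr)$ on $\mathbb{R}^n\subset Cl(\mathbb{R}^n)$ with the Clifford relations $e_ie_j+e_je_i=-2\delta_{ij}$, and then read off the covariant derivative from the general associated-bundle formula $\nabla^s b_\alpha=[\tilde{\mathscr E},d\mu(\tilde{\mathscr E}^*\omega^{\mathrm{Spin}})v_\alpha]$. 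You correctly isolate the one genuinely computational step — matching $\tfrac12 e_ie_j$ against $e_i\wedge e_j$ — and correctly flag it as convention-sensitive: whether $e_i\wedge e_j$ denotes the skew matrix $E_{ij}-E_{ji}$ or its negative must be chosen consistently with the sign in the Clifford relation, and Lawson--Michelsohn fix these so that the identity holds without a stray minus sign. Once that is pinned down, the rest is, as you say, formal; so your proposal is a faithful account of the proof the paper is citing rather than giving.
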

\begin{remark}
\label{Lawson remark 1}
A local section
$(b_1,\dots,b_N)$ corresponds to the choice of basis in the module $M_\mathbb{C}\,$, that is, a basis in the representation $\mu$. Once the representation $\mu$ and basis $(b_1,\dots,b_N)$ are chosen, spinor fields $\varphi$ can be written locally as
\[
\varphi=\varphi^\alpha b_\alpha\,,
\]
where $\varphi^\alpha \in C^\infty(U)$. We also have
\[
\mu(e_j)\,b_\alpha=\mu(e_j)^{\beta}{}_{\alpha}\,b_\beta\,.
\]
Hence, formula \eqref{Lawson's covariant derivatives 1} is equivalent to
\begin{equation*}
\label{Lawson's covariant derivatives 2}
\nabla^s \varphi^\alpha=\mathrm{d}\varphi^\alpha+\frac{1}{2} \sum_{i<j}\tilde{\omega}_{ij}\otimes \mu(e_i e_j)^\alpha{}_\beta\,\varphi^\beta\,,
\end{equation*}
or abbreviated as
\begin{equation}
\label{Lawson's covariant derivatives 3}
\nabla^s \varphi=\mathrm{d}\varphi+\frac{1}{2} \sum_{i<j}\tilde{\omega}_{ij}\otimes \mu(e_i e_j) \cdot \varphi\,.
\end{equation}
\end{remark}
The (massless) Dirac operator acting on $S(E)$ is defined as
\begin{equation}
\label{geometric Dirac operator}
D^s\varphi=e_i\nabla^s_{e_i}\varphi\,.
\end{equation}

It is well known that any spinor bundle of $E$ on a connected manifold can be decomposed into a direct sum of irreducible spinor bundles. However, the most interesting case is when $E=TX$, which implies $n=m$.
Therefore, we will only consider the irreducible complex spinor bundle on $TX$, which further implies that the module has complex dimension $N=2^{\lfloor\frac{n}{2} \rfloor}$, where $\lfloor\,\cdot\,\rfloor$ stands for the integer part.

Let $\Delta_n^\mathbb{C}$ be the representation of $\mathrm{Spin}_n$ given by restricting an irreducible complex representation $\mathbb{C}l(\mathbb{R}^n)\mapsto\mathrm{Hom}_\mathbb{C}(S_N,S_N)$ to $\mathrm{Spin}_n\subset Cl^0(\mathbb{R}^n)\subset \mathbb{C}l(\mathbb{R}^n)$.
\begin{remark}
	\label{Lawson remark 2}
When $n$ is odd, this representation of $\mathrm{Spin}_n$ is independent of which irreducible representations of $\mathbb{C}l(\mathbb{R}^n)$ are used.
\end{remark}

As we only work in odd dimension(s),
we focus on the spin bundle
$\Delta_n(X):=P_{\mathrm{Spin}}(TX)\times_{\Delta_n^\mathbb{C}} S_N$.
Furthermore, we consider the Levi-Civita connection on $P_{\mathrm{SO}}(TM)$, which induces a connection on $P_{\mathrm{Spin}}(TM)$ as given in Theorem \ref{Lawson and Michelsohn thm}. We use $\nabla$ to denote covariant derivatives induced by the Levi-Civita connection. In this case the $\omega_{ij}$ in Theorem \ref{Lawson and Michelsohn thm} are given by
\begin{equation}
\label{Levi-Civita 1 form}
\omega_{ij}=g(\nabla e_i,e_j)\,.
\end{equation}
Hence, formulae \eqref{Lawson's covariant derivatives 3} and \eqref{geometric Dirac operator} become
\begin{equation}
\label{Lawson's covariant derivatives 4}
\nabla \varphi=\mathrm{d}\phi+\frac{1}{4} g(\nabla e_i,e_j)\otimes \Delta_n^\mathbb{C}(e_i e_j) \cdot \varphi
\end{equation}
and
\begin{equation}
\label{geometric Levi-Civita Dirac operator}
W\varphi=\Delta_n^\mathbb{C}(e_i)\nabla_{e_i}\varphi\,,
\end{equation}
where we use $W$ to denote the Dirac operator.

Now, we further assume that our manifold is parallelizable. In particular, this assumption is satisfied for any 3-dimensional oriented manifold. Then $P_{\mathrm{SO}}(TX)$ is trivial. Thus, there exists a global section $\mathscr{E}=(e_1,\dots,e_n)$ of $P_{\mathrm{SO}}(TX)$. This implies that formulae \eqref{Lawson's covariant derivatives 4} and \eqref{geometric Levi-Civita Dirac operator} can be extended globally.

\subsection{Definition via frames}
\label{Definition via frames}
In this subsection we set $n=3$. Hence, $N=2$ and $\Delta_3(X):=P_{\mathrm{Spin}}(TX)\times_{\Delta_3^\mathbb{C}} S_2$.
Consider a triple of orthonormal
(with respect to the given metric $g$)
smooth real vector fields $e_j$, $j=1,2,3$.
Each vector $e_j(y)$ has coordinate components $e_j{}^\alpha(y)$, $\alpha=1,2,3$.
The triple of vector fields $e_j$, $j=1,2,3$,
is called an \emph{orthonormal frame}.
We assume that
\begin{equation}
\label{orientation of frame}
\det e_j{}^\alpha>0\,,
\end{equation}
which means that the orientation of our frame agrees with the orientation of our
local coordinates.

Define Pauli matrices
\begin{equation}
\label{Pauli matrices 1}
\sigma^\alpha(y):=
s^j\,e_j{}^\alpha(y)\,,
\end{equation}
where
\begin{equation}
\label{Pauli matrices 2}
s^1:=
\begin{pmatrix}
0&1\\
1&0
\end{pmatrix},
\qquad
s^2:=
\begin{pmatrix}
0&-i\\
i&0
\end{pmatrix},
\qquad
s^3:=
\begin{pmatrix}
1&0\\
0&-1
\end{pmatrix}.
\end{equation}
Note that formula \eqref{Pauli matrices 1} is equivalent to choosing a particular representation of $\Delta_3^\mathbb{C}$ which is given by
\begin{equation}
\label{Pauli representation}
\Delta_3^\mathbb{C}(e_j)=-is_j\,.
\end{equation}
It is not hard to see that this representation is an irreducible representation of $\mathbb{C}l(\mathbb{R}^3)$.

Let
\begin{equation*}
\left\{{{\beta}\atop{\alpha\gamma}}\right\}:=
\frac12g^{\beta\delta}
\left(
\frac{\partial g_{\gamma\delta}}{\partial y^\alpha}
+
\frac{\partial g_{\alpha\delta}}{\partial y^\gamma}
-
\frac{\partial g_{\alpha\gamma}}{\partial y^\delta}
\right)
\end{equation*}
be the Christoffel symbols.

Using formulae \eqref{Levi-Civita 1 form} to \eqref{geometric Levi-Civita Dirac operator},
we conclude that the massless Dirac operator is the
$2\times2$ matrix first order linear differential operator given by
\begin{equation}
\label{definition of Weyl operator}
W:=-i\sigma^\alpha
\left(
\frac\partial{\partial y^\alpha}
+\frac14\sigma_\beta
\left(
\frac{\partial\sigma^\beta}{\partial y^\alpha}
+\left\{{{\beta}\atop{\alpha\gamma}}\right\}\sigma^\gamma
\right)
\right)
\end{equation}
acting on sections of $\Delta_3\,$. Note that the standard basis for the representation \eqref{Pauli representation} is used here and hence $W$ can be thought of as acting on 2-columns of complex-valued scalar fields. See also Remark \ref{Lawson remark 1} and \cite[Appendix A]{jst_part_b}.

\subsection{Analytic definition}
\label{Analytic definition}

Since we are working on a connected oriented 3-manifold, by picking a global section of $P_{\mathrm{SO}}(TX)$
we can regard the operator $W$ as an operator acting on 2-columns of
complex-valued scalar fields. Now we shall extend it to an operator on half-densities.

The \emph{massless Dirac operator on half-densities}, $W_{1/2}\,$,
corresponding to the given metric $g$
is a particular $2\times2$ matrix first order linear differential operator
acting on 2-columns of complex-valued half-densities.
It is defined by the following four conditions:
\begin{gather}
\label{Analytic definition condition 1}
\operatorname{tr}(W_{1/2})_\mathrm{prin}=0\,,
\\
\label{Analytic definition condition 2}
\det(W_{1/2})_\mathrm{prin}(y,p)=-g^{\alpha\beta}(y)\,p_\alpha p_\beta\,,
\\
\label{Analytic definition condition 3}
(W_{1/2})_\mathrm{sub}=
\frac i{16}\,
g_{\alpha\beta}
\{
(W_{1/2})_\mathrm{prin}
,
(W_{1/2})_\mathrm{prin}
,
(W_{1/2})_\mathrm{prin}
\}_{p_\alpha p_\beta},
\\
\label{Analytic definition condition 4}
-i\,\operatorname{tr}
\bigl[
((W_{1/2})_\mathrm{prin})_{p_1}
((W_{1/2})_\mathrm{prin})_{p_2}
((W_{1/2})_\mathrm{prin})_{p_3}
\bigr]
>0\,.
\end{gather}
Here
$y=(y^1,y^2,y^3)$ denotes local coordinates,
$p=(p_1,p_2,p_3)$  denotes the dual variable (momentum),
$(W_{1/2})_\mathrm{prin}(y,p)$ is the principal symbol,
$(W_{1/2})_\mathrm{sub}(y)$ is the subprincipal symbol,
curly brackets denote the generalized Poisson bracket on matrix-functions
\begin{equation*}
\{F,G,H\}:=F_{y^\alpha}GH_{p_\alpha}-F_{p_\alpha}GH_{y^\alpha},
\end{equation*}
and the subscripts $y^\alpha$ and $p_\alpha$
indicate partial derivatives.

The \emph{massless Dirac operator}, $W$, is defined as
\begin{equation}
\label{definition of Weyl operator on half-densities}
W:=
(\det g_{\kappa\lambda})^{-1/4}
\,W_{1/2}\,
(\det g_{\mu\nu})^{1/4}.
\end{equation}
It acts on 2-columns of complex-valued scalar fields.

The analytic definition of the massless Dirac operator
given in this subsection
originates from \cite[Section 8]{jst_review}
and is equivalent to the traditional geometric definition
presented in subsection \ref{Classical geometric definition}.

\subsection{Spin structure}
\label{Spin structure}

The definitions from
subsections \ref{Definition via frames} and \ref{Analytic definition} 
work for any connected oriented Riemannian 3-manifold
and are equivalent. Note, however, that they do not define the massless
Dirac operator uniquely. Namely, let $W$ be a massless
Dirac operator and let $R(y)$ be an arbitrary smooth $2\times2$
special unitary matrix-function \eqref{special unitary matrix-function}.
One can check that then $R^*WR$ is also a massless Dirac operator.

Let us now look at the issue of non-uniqueness of
the massless Dirac operator the other way round.
Suppose that $W$ and $\tilde W$ are two massless Dirac operators.
Does there exist a smooth matrix-function \eqref{special unitary matrix-function}
such that $\tilde W=R^*WR\,$? If the operators $W$ and $\tilde W$
are in a certain sense `close' then the answer is yes, but in general
there are topological obstructions and the answer is no.
This motivates the introduction of the concept of spin structure, see
\cite[Section 7]{jst_review} and \cite{spin} for details.

Fortunately, for the purposes of our paper the issue of spin structure
is irrelevant because it is known \cite[Section 5]{bar_2000},
that the 3-sphere admits a unique spin structure.
In other words, when we work on $\mathbb{S}^3$ equipped with a Riemannian metric $g$
the constructions from
subsections \ref{Analytic definition} and \ref{Definition via frames}
define the massless Dirac operator uniquely modulo the gauge transformation
\eqref{transformation of the Dirac operator under change of frame},
\eqref{special unitary matrix-function}.

\section{Special vector fields on the 3-sphere}
\label{Special vector fields on the 3-sphere}

Working in $\mathbb{R}^4$ and using Cartesian coordinates,
consider the triple of vector fields
$(\mathbf{K}_\pm)_j{}^{\boldsymbol\alpha}$, $j=1,2,3$,
${\boldsymbol\alpha}=1,2,3,4$, defined as
\begin{equation}
\label{Killing vector fields in R4}
\begin{aligned}
(\mathbf{K}_\pm)_1&=
\begin{pmatrix}
-\mathbf{x}^4&\mp\mathbf{x}^3&\pm\mathbf{x}^2&\mathbf{x}^1
\end{pmatrix},
\\
(\mathbf{K}_\pm)_2&=
\begin{pmatrix}
\pm\mathbf{x}^3&-\mathbf{x}^4&\mp\mathbf{x}^1&\mathbf{x}^2
\end{pmatrix},
\\
(\mathbf{K}_\pm)_3&=
\begin{pmatrix}
\mp\mathbf{x}^2&\pm\mathbf{x}^1&-\mathbf{x}^4&\mathbf{x}^3
\end{pmatrix}.
\end{aligned}
\end{equation}

Observe that the vector fields \eqref{Killing vector fields in R4} are tangent to the 3-sphere
\eqref{definition of 3-sphere},
so let us denote by $(K_\pm)_j{}^\alpha$ the restrictions
of the vector fields \eqref{Killing vector fields in R4}
to the 3-sphere. Here the tensor index $\alpha=1,2,3$ corresponds to local coordinates
$y^\alpha$ on $\mathbb{S}^3$.
Note that we have
$\det\{(K_\pm)_j{}^\alpha\}>0$,
which is in agreement with \eqref{orientation of frame}.

\begin{remark}
\label{remarks for Killing vector fields}
The vector fields $(K_\pm)_j\,$, $j=1,2,3$, constructed above
are special because with the standard metric on $\mathbb{S}^3$
they possess the following properties.
\begin{itemize}
\item[(a)]
The vector fields $(K_\pm)_j$ are orthonormal,
\item[(b)]
The vector fields $(K_\pm)_j$ are Killing vector fields.
\item[(c)]
If we write down the Dirac operator $W_\pm$ using $(K_\pm)_j$ as a frame,
then the eigenspinors corresponding to the eigenvalue $\pm\frac32$
are constant spinors.
Of course, for a given operator $W_+$ or $W_-$
one cannot have constant eigenspinors for eigenvalues $+\frac32$
\textbf{and} $-\frac32$ because this  would contradict
the fact that eigenspinors corresponding to different eigenvalues are orthogonal.
\end{itemize}
\end{remark}

Note that the operators $W_+$ and $W_-$
defined in Remark \ref{remarks for Killing vector fields}(c)
are related as
$W_-=R^*W_+R$,
where $R:\mathbb{S}^3\to\mathrm{SU}(2)$ is the restriction of the matrix-function
\[
\pm
\begin{pmatrix}
\mathbf{x}^4+i\mathbf{x}^3&\mathbf{x}^2+i\mathbf{x}^1\\
-\mathbf{x}^2+i\mathbf{x}^1&\mathbf{x}^4-i\mathbf{x}^3
\end{pmatrix}
\]
to the 3-sphere \eqref{definition of 3-sphere}.

The construction of the unperturbed Dirac operator by means of a
triple of orthonormal Killing vector fields
and an immersion of $\mathbb{S}^3$ in $\mathbb{R}^4$
was previously used in \cite{hehl}.

\section{The scalar Laplacian and its pseudoinverse}
\label{The scalar Laplacian and its pseudoinverse}

In this appendix we work on the 3-sphere equipped with standard metric
$(g_0)_{\alpha\beta}(y)$.

Let $f$ be a smooth scalar function on $\mathbb{S}^3$.
Then there exists a unique sequence of homogeneous harmonic polynomials
$p_n(\mathbf{x})$ of degree $n=0,1,2,\ldots$
such that the series
$\sum\limits_{n=0}^{+\infty}p_n(\mathbf{x})$
converges uniformly, together with all its partial derivatives, on the closed unit ball in
$\mathbb{R}^4$, and coincides with $f$ on $\mathbb{S}^3$.

It is known that the eigenvalues of the operator $-\Delta$
acting on $\mathbb{S}^3$ are
$n(n+2)$, $n=0,1,2,\ldots$,
and their multiplicity is
$(n+1)^2$,
which is the dimension of the vector space of
homogeneous harmonic polynomials of degree $n$.
The explicit formula for the action of the operator $(-\Delta)^{-1}$,
the pseudoinverse of $-\Delta$, 
on our function $f$ is
\begin{equation*}
(-\Delta)^{-1}f=
\left.
\sum_{n=1}^{+\infty}\frac{p_n(\mathbf{x})}{n(n+2)}
\right|_{\|\mathbf{x}\|=1}.
\end{equation*}

\section{Comparison with the 3-torus}
\label{Comparison with the 3-torus}

If we leave only the second and fourth terms in the RHS of
\eqref{formula for lambda2 quadratic form},
substitute this expression into
\eqref{formula for lambda2},
drop the subscripts $\pm$
and use \eqref{formula for V0}, 
we get
\begin{equation}
\label{Comparison with the 3-torus equation 1}
\lambda^{(2)}=
-\frac1{16V^{(0)}}\,
\varepsilon_{qks}
\int_{\mathbb{S}^3}
\left(
h_{jq}\left[L_sh_{jk}\right]
+
h_{rq}
\left[(-\Delta)^{-1}L_rL_sL_jh_{jk}\right]
\right)
\rho_0\,dy\,.
\end{equation}
Formula \eqref{Comparison with the 3-torus equation 1}
coincides with the result from
\cite[Theorem 2.1]{torus}
if we put
$V^{(0)}=(2\pi)^3$ (volume of the unperturbed torus), $\rho_0=1$ and
$L_j=\delta_j{}^\alpha\partial_\alpha$,
with $\partial_\alpha$ denoting partial differentiation
in the $\alpha$th cyclic coordinate on the 3-torus.

\section{Eigenvalues for generalized Berger spheres}
\label{Explicit formulae for some eigenvalues of Dirac operator on generalized Berger spheres}

Here we give further explicit expressions for the eigenvalues using the procedure
from Section \ref{Dirac operator on generalized Berger spheres}
where we apply the operator to harmonic polynomials of degree $n$.
For convenience we seek eigenvalues $\mu$ of the operator
$\widetilde{\mathbf{W}}=\mathbf{W}-\nu I$
obtained by dropping the constant term from
\eqref{Dirac for generalized Berger metric Cartesian 1}.

Let  $\kappa=(\kappa_1,\kappa_2,\kappa_3)\in\{\pm1\}^3$, and let 
\[
 N_+:=\{\kappa\in\{\pm1\}^3: \kappa_1\kappa_2\kappa_3=+1\}=\{(1,1,1), (1,-1,-1), (-1,1,-1),(-1,-1,1)\}.
 \]

For each $n\le4$ we give below an explicit formula
for the characteristic polynomial 
$\chi_n(\mu)$ whose roots give the eigenvalues of $\widetilde{\mathbf{W}}$.
For $n\ge5$ formulae become too cumbersome to list, and we do not
have a general formula yet.

\begin{description}
\item[\qquad \bf{Degree }$n=0$.] \[\chi_0(\mu)=\mu^2.\]
\item[\qquad \bf{Degree }$n=1$.] \[\chi_1(\mu)=\prod\limits_{\kappa\in N_+} \left[\mu+\sum_{j=1}^3 \frac{\kappa_j}{a_j}\right]^2.\]
See also formulae
\eqref{lowest negative eigenvalue for generalized Berger metric}--
\eqref{lowest negative eigenvalue for generalized Berger metric extra 3}.
\item[\qquad \bf{Degree }$n=2$.] 
\[
\chi_2(\mu)=\left[\mu^3-\left(4\sum_{j=1}^3a_j^{-2}\right)\mu+\frac{16}{\prod_{j=1}^3 a_j}\right]^6.
\]
\item[\qquad \bf{Degree }$n=3$.] \[
\chi_3(\mu)=\prod_{\kappa\in N_+} \left[\mu^2-\left(\sum_{j=1}^3 \frac{\kappa_j}{a_j}\right)\mu-3\left(\sum_{j=1}^3 a_j^{-2}-\sum_{\substack{j,k=1\\j\ne k}}^3 \frac{\kappa_j\kappa_k}{a_ja_k}\right)\right]^4.
\]
\item[\qquad \bf{Degree }$n=4$.]
\[
\begin{split}
\chi_4(\mu)&=
\left[\mu^5
-\left(20\sum_{j=1}^3a_j^{-2}\right)\mu^3
+\left(\frac{80}{\prod_{j=1}^3 a_j}\right)\mu^2
\vphantom{\left(\sum_{j=1}^3a_j^{-4}+2\sum_{\substack{j,k=1\\j\ne k}}^3 a_j^{-2}c_k^{-2}\right)}
\right.\\
&\left.
+64\left(\sum_{j=1}^3a_j^{-4}+2\sum_{\substack{j,k=1\\j\ne k}}^3 a_j^{-2}a_k^{-2}\right)\mu
-768\,\frac{\sum_{j=1}^3a_j^{-2}}{\prod_{j=1}^3 a_j}\right]^{10}.
\end{split}
\]
\end{description}

\end{appendices}

\end{document}